\definecolor{pred}{RGB}{148,55,61}
\definecolor{blue}{rgb}{0,0,0.9}
\definecolor{red}{rgb}{0.9,0,0}
\definecolor{green}{rgb}{0,0.9,0}
\def\Re{\mathbb{R}}
\definecolor{red}{rgb}{0.5,0,0}
\newtheorem{theorem}{Theorem}[section]
\newtheorem{proposition}{Proposition}[section]
\newtheorem{lemma}{Lemma}[section]
\newtheorem{remark}{Remark}[section]
\title{A dual based semismooth Newton method for a class of sparse Tikhonov regularization\footnote{Submitted to the editors 2019-04-13, revised: 2020-01-15.}}
\date{December 10, 2020}
\author{{Ning Zhang}\thanks{
School of Computer Science and Technology, Dongguan University of Technology, Dongguan 523808, China. The research of this author was supported by the National Natural Science Foundation of China (11901083~\&11801023) (\tt{zhangning@dgut.edu.cn}) }}
\begin{document}
\maketitle
\vspace{2mm}
\begin{center}
\parbox{13.5cm}{\small
\textbf{Abstract.} It is well known that Tikhonov regularization is one of the most commonly used methods for solving the ill-posed problems.
{One of the most widely applied approaches is based on constructing a new dataset whose sample size is greater than the original one. The enlarged sample size may bring additional computational difficulties.
In this paper, we aim to make full use of Tikhonov regularization and develop a dual based semismooth Newton (DSSN) method without destroying the structure of dataset.}
From the point of view of theory, we will show that {the DSSN method is a globally convergent method with at least R-superlinear rate of convergence.}
In the numerical computation aspect, we evaluate the performance of the DSSN method by solving a class of sparse Tikhonov regularization with high-dimensional datasets.
\\[2mm]
\textbf{Key words.} Semismooth Newton method;  R-superlinear; dual based method; Tikhonov regularization  \\[2mm]
\ \textbf{AMS Subject Classifications(2010):} 65F22,\,\,93B40,\,\,90C20}		
\end{center}

%65F22 Ill-posedness, regularization
%93B40 Computational methods

%90C20 Quadratic programming

\section{Introduction}
In order to overcome the drawbacks of the ordinary Least squares (OLS) in prediction accuracy and interpretation,
Tibshirani introduced the Lasso technique \cite{tibshirani1996regression}, which is essentially an $\ell_1$ norm regularized least-square problem.
With the emergence of a large number of  low sample size and high dimensional data, Lasso and its variants have drawn more and more attentions.
In this paper, we consider the sparse Tikhonov regularization problem in the following form
\begin{equation}\label{RLasso}
\min\limits_{x\in\Re^n}~ f(x):=\frac{1}{2}\|Ax-b\|^2+\frac{\lambda}{2}\|x\|^2+\varphi(x),
\end{equation}
where $\lambda> 0$,  $A\in\Re^{m\times n}$ {is a given data matrix}, $\varphi:\Re^n\rightarrow (-\infty,+\infty]$
is a closed proper convex (not necessarily smooth) function.
If $\varphi(x):=\mu\|x\|_1,\,\,\mu>0$, problem \eqref{RLasso} reduces to the following elastic net regularized regression, which was proposed by Zou and Hastie \cite{zou2005regularization} to resolve the limitations of Lasso,
\begin{equation}\label{model-Elass}
\min\limits_{x\in\Re^n}~ \frac{1}{2}\|Ax-b\|^2+\frac{\lambda}{2}\|x\|^2+\mu\|x\|_1.
\end{equation}
The above model has been extensively used in many fields, such as uncovering the consistent networks of functional disconnection in Alzheimer's disease \cite{teipel2017robust}, estimating global bank network connectedness \cite{demirer2018estimating}.

Inspired by the fact that the pixel values generated from image restoration problems should be nonnegative, Bai et al. \cite{bai2017modulus} studied a
nonnegative Tikhonov regularization as below:
$$
\min\limits_{x\in\Re^n}~ \frac{1}{2}\|Ax-b\|^2+\frac{\lambda}{2}\|x\|^2+\mathbb{I}_{\Re^{n}_+}(x),
$$
where $\mathbb{I}_{\Re^{n}_+}(x)=0$ if $x\geq0$; $\mathbb{I}_{\Re^{n}_+}(x)=+\infty$ otherwise.

If $\varphi(x):=\mu\|x\|_1+\mathbb{I}_{\Re^{n}_+}(x),\,\,\mu>0$, problem \eqref{RLasso} reduces to the following nonnegative elastic Lasso, which  has been applied to estimate the
microstructure indices from diffusion magnetic resonance data \cite{daducci2015accelerated} and to track the market index \cite{wu2014nonnegative}
\begin{equation}\label{model-NElass}
\min\limits_{x\in\Re^n}~ \frac{1}{2}\|Ax-b\|^2+\frac{\lambda}{2}\|x\|^2+\mu\|x\|_1+\mathbb{I}_{\Re^{n}_+}(x).
\end{equation}
One of the most popular way to find the solution of problem \eqref{RLasso} is to reformulate it into the following regularized least square problem and directly apply some well-established algorithms, such as the block coordinate descent algorithm \cite{friedman2010regularization} and accelerated proximal gradient method \cite{beck2009fast,chen2018generalized}:
\begin{equation}\label{RLasso-R}
\min\limits_{x\in\Re^n}~ f(x):=\frac{1}{2}\|
\widetilde{A}x-\tilde{b}\|^2+\varphi(x),
\end{equation}
where
\begin{equation}\label{def-new-Ab}
\widetilde{A}=\left(\begin{array}{c}
A\\
\sqrt{\lambda}I
\end{array}
\right)\in\Re^{(m+n)\times n},\qquad\tilde{b}=\left(\begin{array}{c}
b\\
0
\end{array}
\right)\in\Re^{m+n}.
\end{equation}
It can be seen from \eqref{def-new-Ab} that the size of newly constructed data matrix $\widetilde{A}$ may increase greatly in the number of rows. This may bring additional issues by using the existing solution methods.

More recently, the semismooth Newton based augmented Lagrangian algorithm (S{\footnotesize SNAL}) has shown excellent numerical performance for
solving various large-scale Lasso-type problems \cite{li2018efficiently,li2016highly,Zhang2018efficient}. {From the relationship between the augmented Lagrangian algorithm and the proximal point algorithm \cite[Section 4]{rockafellar1976augmented}, we can observe that each subproblem of S{\footnotesize SNAL} is essentially a Tikhonov regularization problem (see Remark \ref{remark-1} for the details). Motivated by the observation,
we aim to develop a dual based semismooth Newton method for solving the sparse Tikhonov regularization problem \eqref{RLasso}.}

{It is well known that the local convergence results for convex optimization  are usually not sufficient to guarantee the performance of the semismooth Newton method . An efficient globalization strategy has been proposed in \cite{qi2006quadratically}. This strategy mainly depends on a continuously differentiable convex function
whose gradient is Karush-Kuhn-Tucker (KKT) mapping (see e.g., \cite[Page 625]{han2017linear}) of the original optimization problem.
However, the strategy used in \cite{qi2006quadratically} is no longer applicable to the primal formulation \eqref{RLasso}. The problem is due to the fact that the KKT mapping of problem \eqref{RLasso} (see \eqref{def-F}) can not easily be viewed as a gradient mapping of any real valued function}. On the other hand, though the smoothing Newton method studied in \cite{gao2009calibrating} is globally convergent, additional smoothing function should be introduced. In fact, by taking the advantage of the Tikhonov regularization, we can show that the DSSN method converges globally and at least R-superlinearly.

{The main contributions of this paper can be summarized as follows. Firstly, we propose a dual based semismooth Newton method (DSSN) for solving the sparse Tikhonov regularization problem \eqref{RLasso}. By fully taking advantage of the Tikhonov regularization, the proposed DSSN method can avoid enlarging the scale of data matrix.
Secondly,  we prove that the proposed DSSN method is globally convergent and can achieve at least R-superlinear convergence rate.
Finally, by deeply exploring the sparsity of the second-order information associated with the sparse regularizer, the robustness and effectiveness can be shown in the proposed method. This can be supported by the
numerical results presented in section 4.}

The rest of this paper is organized as follows. In section \ref{sec:preliminary}, we present some preliminary results that will be used for arithmetic design and numerical implementation. In section \ref{sec:DSSN}, we propose the dual based semismooth Newton method for solving the sparse Tikhonov regularization problem, and establish its convergence results. In section \ref{sec:num}, we evaluate the numerical performance of the DSSN on UCI data sets.
Finally, we conclude the paper in section \ref{sec:conclusion}.

{\bf Notation:} Let $\Re^n (\Re^n_+, \Re^n_-$)  be the set of all (non-negative, non-positive) $n$-vectors, $\mathbb{R}^{m\times n}$ be the set of all $m\times n$ real matrices, and $\mathbb{S}^n$ be the set of all $n\times n$ real symmetric matrices.  Let $\mathcal{U}$ be a finite-dimensional real Hilbert space, $h:\mathcal{U}\to\Re\cup\{+\infty\}$, we use $\partial_Bh(u)$ and $\partial h(u)$ to denote the B-subdifferential \cite[Equation (2.12)]{qi1993convergence} and Clarke's generalized Jacobian \cite[Definition 2.6.1]{clarke1990optimization} of function $h$ at $u\in\mathcal{U}$, respectively. Let $C$ be a closed convex set in $\mathcal{U}$, we use $\Pi_{C}(u)$ to denote the Euclidean projection of $u\in\mathcal{U}$ onto $C$. Let ${\rm Diag}(v)$ denote a diagonal matrix whose $i$-th diagonal entry is the $i$-th element of vector $v\in\Re^n$. Let ${\rm sign}(\cdot)$ denote the sign mapping on $\Re^n$, i.e., $[{\rm sign}(v)]_i=1$, if $v_i>0$; $[{\rm sign}(v)]_i=-1$, if $v_i<0$; $[{\rm sign}(v)]_i=0$, if $v_i=0,\, i=1,\ldots,n$. We denote the vector of all ones by $e$.
Let index set $\mathcal{I}\subseteq\{1,\ldots,n\}$, for any matrix $A\in\Re^{m\times n}$, we use $A_{\mathcal{I}}$  to denote $m\times |\mathcal{I}|$ sub-matrix of $A$ obtained by removing all the columns of $A$ not in $\mathcal{I}$.  We use ``$\cdot$'' to denote the Hadamard product between matrices and  ``$\circ$'' to denote function composition.

\section{Preliminaries}\label{sec:preliminary}
Let $\phi: \mathcal{U}\rightarrow{\Re}\cup\{+\infty\}$ be a proper, lower semicontinuous, convex function. Denote by $\Phi_{\phi}(u)$ the Moreau-Yosida regularization \cite{moreau1965proximite,yosida1964functional} of $\phi$,
$$
\begin{array}{c}
\Phi_{\phi}(u):=\min\limits_{u'\in\mathcal{U}}\left\{\phi(u')+\frac{1}{2}\|u'-u\|^2\right\},\,\,\,\forall\,u\in\mathcal{U}.
\end{array}
$$
The proximal mapping associated with $\phi$ is defined by
$$
\begin{array}{c}
{\rm Prox}_{\phi}(u) := \arg\min\limits_{u'\in\mathcal{U}}\left\{\phi(u')+\frac{1}{2}\|u'-u\|^2\right\},\,\,\,\forall\,u\in\mathcal{U}.
\end{array}
$$
From e.g., \cite{hiriart2013convex,lemarechal1997practical}, we know that $\Phi_{\phi}(\cdot)$ is a continuously differentiable convex function with its gradient being given by
$$
\nabla \Phi_{\phi} (u) = u-{\rm Prox}_{\phi}(u).
$$
The following identity \cite[Theorem 31.5]{rockafellar2015convex} will be used in the subsequent analysis,
\begin{equation}\label{prox-id}
{\rm Prox}_{\phi}(u)+{\rm Prox}_{\phi^*}(u)=u,
\end{equation}
where $\phi^*$ is the conjugate function (for its definition, see e.g., \cite[Page 104]{rockafellar2015convex}) of $\phi$.
\begin{lemma}\label{lemma-prof-conj}
For any given $u\in\Re^n$ , the following hold:
\begin{description}
\item[(a)] If $\phi(x)=\mu\|x\|_1$ and $\mu>0$, then $\phi^*(u)=\mathbb{I}_{\mathbb{B}_{\infty,\mu}}(u)$ with $\mathbb{B}_{\infty,\mu}:=\{u|\,\|u\|_{\infty}\leq \mu\}$ and
$$
{{\rm Prox}_{\phi}(u)={\rm sign}(u)\cdot{\rm max}\{|u|-\mu e,0\}.}
$$

\item[(b)] If $\phi(x)=\mathbb{I}_{\Re^{n}_+}(x)$, then  $\phi^*(x)=\mathbb{I}_{\Re^{n}_-}(x)$ and ${\rm Prox}_{\phi}(u)=\Pi_{\Re^n_+}(u) $.
\end{description}
\end{lemma}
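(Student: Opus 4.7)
The plan is to verify each part by directly computing the Fenchel conjugate from its definition and then deriving the proximal map either by separability (in part (a)) or by invoking the fact that the proximal mapping of the indicator of a closed convex set is the Euclidean projection onto that set (in part (b)). Both statements are classical results in convex analysis, so the proofs will be short once the right observations are made.

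For part (a), I would first expand $\phi^*(u)=\sup_{x\in\Re^n}\{\langle u,x\rangle-\mu\|x\|_1\}$. When $\|u\|_\infty\leq \mu$, H\"older's inequality gives $\langle u,x\rangle\leq \|u\|_\infty\|x\|_1\leq \mu\|x\|_1$, so the supremum equals $0$ (attained at $x=0$). When $|u_j|>\mu$ for some coordinate $j$, testing $x=t\,{\rm sign}(u_j)e_j$ and letting $t\to +\infty$ forces the supremum to $+\infty$. Combining the two cases yields $\phi^*(u)=\mathbb{I}_{\mathbb{B}_{\infty,\mu}}(u)$. For the proximal map, I would exploit separability: since $\mu\|x\|_1+\tfrac{1}{2}\|x-u\|^2=\sum_{i=1}^n\bigl(\mu|x_i|+\tfrac12(x_i-u_i)^2\bigr)$, the minimizer decouples into the scalar problem $0\in\mu\partial|x_i|+x_i-u_i$. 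A routine case split on whether $u_i>\mu$, $u_i<-\mu$, or $|u_i|\leq\mu$ (using $\partial|0|=[-1,1]$ for the third case) recovers the soft-thresholding formula ${\rm Prox}_{\phi}(u)_i={\rm sign}(u_i)\max\{|u_i|-\mu,0\}$.

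For part (b), I would compute $\phi^*(u)=\sup_{x\geq 0}\langle u,x\rangle$ by a one-line case analysis: if $u\in\Re^n_-$ then every summand $u_ix_i\leq 0$ on $\Re^n_+$, so the supremum is $0$ at $x=0$; if some $u_j>0$, taking $x=te_j$ with $t\to +\infty$ shows the supremum is $+\infty$. Hence $\phi^*=\mathbb{I}_{\Re^n_-}$. The proximal identity is even shorter: because $\phi$ is the indicator function of the closed convex cone $\Re^n_+$, ${\rm Prox}_{\phi}(u)=\arg\min_{x\in\Re^n_+}\tfrac12\|x-u\|^2=\Pi_{\Re^n_+}(u)$ by definition.

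As for the main obstacle, there really is none of substance: both claims are textbook calculations. The only step that requires a small amount of care is the soft-thresholding derivation, where one must correctly handle the nondifferentiability of $|\cdot|$ at the origin via its subdifferential $\partial|0|=[-1,1]$ in order to obtain the zero branch of the thresholding map. Everything else reduces to H\"older's inequality, separability, and the defining property of the proximal mapping.
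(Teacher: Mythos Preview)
Your proposal is correct. The paper, however, states this lemma without proof, treating both items as standard facts from convex analysis (the soft-thresholding formula and the projection characterization of the prox of an indicator). Your argument supplies exactly the textbook derivations one would give if asked to fill in the details: H\"older's inequality and a coordinate blow-up for the conjugate in (a), separability plus the subdifferential optimality condition for the soft-thresholding map, and the one-line case split for the conjugate in (b) together with the definition of the proximal mapping of an indicator as the Euclidean projection. There is nothing to compare against in the paper beyond noting that the authors regarded these as well known; your write-up is a faithful and complete justification.
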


\begin{lemma}\label{lem-B-sub}
For any given $v\in\Re^n$, the following hold:
\begin{description}
  \item[(a)] The B-subdifferential of ${\rm Prox}_{\mu\|\cdot\|_1}(\cdot)$ at $v$ is given by
  \begin{equation}\label{def-Bsub-L1}
  \partial_B{\rm Prox}_{\mu\|\cdot\|_1}(v) = \left\{{\rm Diag}(\theta)\Big|\, \theta\in\Re^n,\,\,{\theta_i}\in\left\{\begin{array}{ll}
   \{1\}, & \hbox{\it if $|v_i|>\mu$},\\
   \{0,1\}, & \hbox{\it if $|v_i|=\mu$},\\
 \{ 0\}, & \hbox{\it if $|v_i|<\mu$},
   \end{array}
   \right. \,\,i=1,\ldots,n  \right\}.
  \end{equation}
\item[(b)]The B-subdifferential of $\Pi_{\Re^n_+}(\cdot)$ at $v$ is given by
\begin{equation*}%\label{def-B-sub-Proxp}
\partial_B\Pi_{\Re^n_+}(v)=\left\{{\rm Diag}(\theta)|\,\theta\in\Re^n,\,\theta_i\in\partial_B\max\{v_i,0\},\,\,  \,\,i=1,\ldots,n  \right\},
\end{equation*}
where for $i=1,\ldots,n$,
\begin{equation}\label{def-Bsub-pos}
\partial_B\max\{v_i,0\}=\left\{\begin{array}{ll}
	\{1\}, & \hbox{if $v_i>0$},\\
	\{0,1\}, & \hbox{if $v_i=0$},\\
	\{0\}, & \hbox{if $v_i<0$}.
\end{array}
\right.
\end{equation}
\end{description}
\end{lemma}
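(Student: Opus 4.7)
The plan is to exploit the separable structure of both proximal mappings. By Lemma \ref{lemma-prof-conj}, each component of ${\rm Prox}_{\mu\|\cdot\|_1}(v)$ and $\Pi_{\Re^n_+}(v)$ depends only on the corresponding component of $v$, so both mappings decompose as Cartesian products of scalar piecewise-affine functions. This forces any Jacobian (where it exists) to be diagonal, and reduces the problem to computing the scalar B-subdifferentials at each coordinate.

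For part (a), I would write $[{\rm Prox}_{\mu\|\cdot\|_1}(v)]_i = \psi(v_i)$ where $\psi(t) = {\rm sign}(t)\max\{|t|-\mu,0\}$. This $\psi$ is continuously differentiable on $\Re\setminus\{\pm\mu\}$, with $\psi'(t)=1$ when $|t|>\mu$ and $\psi'(t)=0$ when $|t|<\mu$. Hence the set of points where ${\rm Prox}_{\mu\|\cdot\|_1}$ is (Fr\'echet) differentiable is exactly $D:=\{v\in\Re^n:|v_i|\neq\mu,\,i=1,\ldots,n\}$, and at any $v\in D$ the Jacobian is ${\rm Diag}(\theta)$ with $\theta_i\in\{0,1\}$ determined as above. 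Recalling the definition of $\partial_B$ as the set of limits $\lim_k J{\rm Prox}_{\mu\|\cdot\|_1}(v^k)$ along sequences $\{v^k\}\subset D$ converging to $v$, the key observation is that at a point $v$ with $|v_{i_0}|=\mu$ one may perturb the $i_0$-th coordinate independently, either to $|v_{i_0}|+\epsilon$ or to $|v_{i_0}|-\epsilon$, yielding the diagonal entry $1$ or $0$ in the $i_0$-th position. Doing this independently across all kink indices produces exactly the set on the right-hand side of \eqref{def-Bsub-L1}, and no other diagonal matrices can arise since the entries at non-kink indices are fixed by continuity.

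For part (b), the argument is essentially identical: $[\Pi_{\Re^n_+}(v)]_i = \max\{v_i,0\}$, so the scalar mapping $t\mapsto\max\{t,0\}$ is smooth off the origin with derivative $1$ on $(0,\infty)$ and $0$ on $(-\infty,0)$, giving the one-sided limits $\{0,1\}$ at $t=0$. The separability then yields the diagonal representation in \eqref{def-Bsub-pos}.

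The only subtle step — although essentially a book-keeping matter — is the justification of the ``product'' description of the B-subdifferential for a separable Lipschitz map: one must verify both that every limiting Jacobian is diagonal with the claimed entries, and that every admissible diagonal matrix is actually attained as such a limit. Both are handled cleanly by perturbing only the kink coordinates, independently and by arbitrarily small amounts, while leaving the non-kink coordinates fixed so that continuity of the scalar derivative pins down their diagonal entries.
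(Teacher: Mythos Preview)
Your argument is correct: the separable structure forces diagonal Jacobians wherever the map is differentiable, the scalar soft-threshold and positive-part functions are piecewise linear with the derivatives you identify, and independent one-sided perturbations of the kink coordinates realize every admissible diagonal as a limiting Jacobian while excluding all others. The only minor remark is that one should also note that the complement of the differentiability set $D$ has Lebesgue measure zero (being a finite union of hyperplanes), so that Rademacher's theorem is not even needed here and the definition of $\partial_B$ applies directly.

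However, there is nothing to compare against: the paper states this lemma without proof, treating the formulas \eqref{def-Bsub-L1} and \eqref{def-Bsub-pos} as standard. Your write-up therefore supplies what the paper omits. If you wished to match the paper's level of brevity, you could simply observe that both mappings are separable piecewise-linear functions and invoke the componentwise description of $\partial_B$ for such maps (cf.\ \cite[Proposition~7.4.7]{facchinei2007finite} or similar); but the explicit verification you give is perfectly acceptable and arguably more transparent.
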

Since both ${\rm Prox}_{\mu\|\cdot\|_1}(\cdot)$ and $\Pi_{\Re^n_+}(\cdot)$ are piecewise linear functions, we can obtain the following results from \cite[Proposition 7.47]{facchinei2007finite}.
\begin{lemma}\label{lem-prox-semi}
For given $\mu>0$,  both ${\rm Prox}_{\mu\|\cdot\|_1}(\cdot)$ and $\Pi_{\Re^n_+}(\cdot)$ are strongly semismooth.
\end{lemma}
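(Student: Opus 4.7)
The plan is to verify that each of the two proximal/projection mappings satisfies the definition of a piecewise affine (piecewise linear) function on $\Re^n$, and then to invoke the cited Proposition 7.47 in Facchinei--Pang, which states that every piecewise affine mapping between finite-dimensional spaces is strongly semismooth at every point.

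First I would treat ${\rm Prox}_{\mu\|\cdot\|_1}(\cdot)$. By Lemma \ref{lemma-prof-conj}(a), this mapping is separable and componentwise equal to the soft-thresholding operator
$$
[{\rm Prox}_{\mu\|\cdot\|_1}(v)]_i \;=\; \begin{cases} v_i-\mu, & v_i>\mu,\\ 0, & -\mu\le v_i\le \mu,\\ v_i+\mu, & v_i<-\mu. \end{cases}
$$
Partitioning each coordinate axis into the three closed polyhedra $(-\infty,-\mu]$, $[-\mu,\mu]$, $[\mu,+\infty)$ and taking products yields a finite (at most $3^n$) polyhedral cover of $\Re^n$ on each cell of which the map is affine; continuity at the overlapping boundaries $|v_i|=\mu$ is immediate from the formula. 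Hence ${\rm Prox}_{\mu\|\cdot\|_1}(\cdot)$ fits the definition of a piecewise affine function.

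Next I would repeat the argument for $\Pi_{\Re^n_+}(\cdot)$. Separability gives $[\Pi_{\Re^n_+}(v)]_i=\max\{v_i,0\}$, and the sign pattern of the coordinates induces a polyhedral partition of $\Re^n$ into $2^n$ orthants on each of which the mapping is linear (it projects out the coordinates with negative sign). Continuity is clear, so this map too is piecewise affine.

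Having established both mappings as piecewise affine, I would then simply invoke \cite[Proposition 7.47]{facchinei2007finite} to conclude that both are strongly semismooth everywhere, which is the claim. The main obstacle is essentially cosmetic: one must make sure the polyhedral decomposition is finite, covers $\Re^n$, and that the map is continuous across the boundaries between pieces — all of which follow from inspecting the explicit componentwise formulas. No additional differential or subdifferential analysis is required beyond what Lemmas \ref{lemma-prof-conj} and \ref{lem-B-sub} already provide.
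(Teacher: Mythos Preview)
Your proposal is correct and follows essentially the same approach as the paper: the paper simply asserts that both mappings are piecewise linear and then invokes \cite[Proposition~7.4.7]{facchinei2007finite}. Your write-up just fills in the (evident) details of the polyhedral decomposition that the paper leaves implicit.
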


The proof of the following results can be obtained by using the same routine of \cite[Proposition 2.1]{Zhang2018efficient} and \cite[Theorem 2]{li2018efficiently}.
In order to make this paper more readable, we will provide a sketch of the proof of the following lemma.
\begin{lemma}\label{lem-semismooth}
For any given $\mu>0$, let $\phi(x):=\mu\|x\|_1+\Pi_{\Re^{n}_+}(x)$. Then it holds that
\begin{equation}\label{def-prox-dec}
{\rm Prox}_{\phi}(z) = {\rm Prox}_{\mu\|\cdot\|_1}\circ \Pi_{\Re^n_+}(z), \,\,\,\forall z\in\Re^n.
\end{equation}
Furthermore, define a set-valued function $\widehat{\partial}_B{\rm Prox}_{\phi}:\Re^n\rightrightarrows\Re^{n\times n}$ as
\begin{equation}\label{def-subdiff-10}
\widehat{\partial}_B{\rm Prox}_{\phi}(u) = \left\{W=\Theta\Xi:\,\Theta\in \partial_B{\rm Prox}_{\mu\|\cdot\|_1}(v),\,\Xi\in \partial_B\Pi_{\Re^n_+}(u),\,v= \Pi_{\Re^n_+}(u)\right\},
\end{equation}
then the  set-valued function $\widehat{\partial}_B{\rm Prox}_{\phi}$ is a nonempty compact valued upper-semicontinuous multi-function and for any $W\in\widehat{\partial}_B{\rm Prox}_{\phi}(u)$, $W$ is a symmetric positive semidefinite matrix and for  $u'\rightarrow u$, it holds that
\begin{equation}\label{lem-semi-smth}
{\rm Prox}_{\phi}(u')-{\rm Prox}_{\phi}(u)-W(u'-u)=\mathcal{O}(\|u'-u\|^2),\,\,\forall\,W\in \widehat{\partial}_B{\rm Prox}_{\phi}(u').
\end{equation}
\end{lemma}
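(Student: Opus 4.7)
The plan is to split the proof into three parts matching the three assertions of the lemma. For the decomposition identity \eqref{def-prox-dec}, I would exploit separability: since $\phi(x) = \sum_{i=1}^n \bigl[\mu|x_i| + \mathbb{I}_{[0,\infty)}(x_i)\bigr]$, the proximal problem decouples into $n$ scalar problems $\min_{y\ge 0}\{\mu y + \tfrac{1}{2}(y - z_i)^2\}$, whose unique minimizer is $\max\{z_i - \mu, 0\}$. A brief case analysis on the sign of $z_i$ verifies that this value equals $\bigl[{\rm Prox}_{\mu\|\cdot\|_1}\!\circ\!\Pi_{\Re^n_+}(z)\bigr]_i$, yielding \eqref{def-prox-dec}.

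For the structural properties of $\widehat{\partial}_B{\rm Prox}_\phi$, Lemma \ref{lem-B-sub} implies that every $\Theta \in \partial_B{\rm Prox}_{\mu\|\cdot\|_1}(v)$ and every $\Xi \in \partial_B\Pi_{\Re^n_+}(u)$ is a $\{0,1\}$-valued diagonal matrix; the product $\Theta\Xi$ is therefore also a $\{0,1\}$-diagonal matrix, automatically symmetric positive semidefinite. Nonemptiness and compactness of $\widehat{\partial}_B{\rm Prox}_\phi(u)$ at every $u$ are immediate, and upper semicontinuity follows by composing the continuous map $u \mapsto \Pi_{\Re^n_+}(u)$ with the outer-semicontinuous multi-functions $v \mapsto \partial_B{\rm Prox}_{\mu\|\cdot\|_1}(v)$ and $u \mapsto \partial_B\Pi_{\Re^n_+}(u)$, together with the continuity of matrix multiplication.

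The semismoothness estimate \eqref{lem-semi-smth} is the most substantive step, and I would obtain it by a chain-rule decomposition. Fix $u$, let $u' \to u$, set $v = \Pi_{\Re^n_+}(u)$ and $v' = \Pi_{\Re^n_+}(u')$, and take $W = \Theta\Xi \in \widehat{\partial}_B{\rm Prox}_\phi(u')$ with $\Theta \in \partial_B{\rm Prox}_{\mu\|\cdot\|_1}(v')$ and $\Xi \in \partial_B\Pi_{\Re^n_+}(u')$. Using \eqref{def-prox-dec} I would write
\begin{equation*}
{\rm Prox}_\phi(u') - {\rm Prox}_\phi(u) - W(u'-u)
 = \bigl[{\rm Prox}_{\mu\|\cdot\|_1}(v') - {\rm Prox}_{\mu\|\cdot\|_1}(v) - \Theta(v'-v)\bigr]
 + \Theta\bigl[(v'-v) - \Xi(u'-u)\bigr].
\end{equation*}
The first bracket is $\mathcal{O}(\|v'-v\|^2)$ by strong semismoothness of ${\rm Prox}_{\mu\|\cdot\|_1}$ at $v$ (Lemma \ref{lem-prox-semi}), and since $\Pi_{\Re^n_+}$ is $1$-Lipschitz this equals $\mathcal{O}(\|u'-u\|^2)$; the second bracket is $\mathcal{O}(\|u'-u\|^2)$ by strong semismoothness of $\Pi_{\Re^n_+}$ at $u$, and left-multiplication by the uniformly bounded $\Theta$ preserves this order.

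The main obstacle I anticipate is ensuring that the chain-rule splitting is valid for \emph{every} $W \in \widehat{\partial}_B{\rm Prox}_\phi(u')$, rather than for some distinguished selection: one has to combine the two quadratic-residual estimates using exactly the pair $(\Theta, \Xi)$ that decomposes the given $W$. The resolution is that the strong semismoothness guaranteed by Lemma \ref{lem-prox-semi} is a uniform statement over the full B-subdifferential at the perturbed point, so no matter which pair appears in the decomposition of $W$, both bracketed residuals are $\mathcal{O}(\|u'-u\|^2)$. This uniformity is precisely what makes $\widehat{\partial}_B{\rm Prox}_\phi$ an admissible generalized-Jacobian surrogate for the composition ${\rm Prox}_{\mu\|\cdot\|_1} \circ \Pi_{\Re^n_+}$, even though the classical chain rule for Clarke's generalized Jacobian is known to fail for general Lipschitz compositions.
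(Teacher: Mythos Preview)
Your proposal is correct and covers all three assertions of the lemma. The paper's own proof takes a different, more citation-driven route. For the decomposition \eqref{def-prox-dec}, instead of exploiting separability and solving the scalar problems directly, the paper invokes Yu's abstract proximal-decomposition theorem \cite[Theorem~1]{Yu2013}, which reduces the question to the subdifferential containment $\partial\Pi_{\Re^n_+}(z) \subseteq \partial\Pi_{\Re^n_+}({\rm Prox}_{\mu\|\cdot\|_1}(z))$, and then verifies this containment via Lemma~\ref{lem-B-sub}. For the upper-semicontinuity and the strong-semismoothness estimate \eqref{lem-semi-smth}, the paper does not write out the telescoping argument you give but instead appeals directly to a ready-made composition result, \cite[Theorem~7.5.17]{facchinei2007finite}, together with Lemma~\ref{lem-prox-semi}. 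Your approach is more elementary and self-contained: the componentwise computation and the explicit chain-rule splitting make the argument transparent without external machinery, and your discussion of why the estimate holds uniformly over all decompositions $(\Theta,\Xi)$ of a given $W$ is exactly the content that the cited theorem packages. The paper's approach is shorter on the page but presumes the reader has those references at hand; yours would be preferable in a self-contained exposition.
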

\begin{proof}
By \cite[Theorem 1]{Yu2013}, in order to obtain equation \eqref{def-prox-dec}, it is sufficient to show that
$$
 \partial \Pi_{\Re^n_+}(z)\subseteq\partial \Pi_{\Re^n_+}({\rm Prox}_{\mu\|\cdot\|_1}(z)),\,\,\,\forall z\in\Re^n,
$$
which can be obtained directly from the definition of Clarke's generalized Jacobian and Lemma \ref{lem-B-sub}.
Furthermore,  it follows from Lemma \ref{lem-prox-semi}, \eqref{def-prox-dec} and \cite[Theorem7.5.17]{facchinei2007finite} that the set-valued function $\widehat{\partial}_B{\rm Prox}_{\phi}$ is a nonempty compact valued upper-semicontinuous multi-function and equation \eqref{lem-semi-smth} holds. Besides, the result that any elements in $\widehat{\partial}_B{\rm Prox}_{\phi}(u)$ is symmetric and positive semidefinite can be obtained from Lemma \ref{lem-B-sub}. The proof is completed.
\end{proof}
\begin{remark}
In the numerical experiment, motivated by the idea in \cite{li2016highly}, for any given $v\in\Re^n$, we choose the following element in $\partial_B{\rm Prox}_{\mu\|\cdot\|_1}(v)$:
\begin{equation}\label{def-prox-L1}
\Theta={\rm Diag}(\theta)\,\,\,\hbox{\it with}\,\,\,
\theta_i=\left\{\begin{array}{ll}
1, & \hbox{if $|v_i|>\mu$},\\[3pt]
0, & \hbox{if $|v_i|\leq \mu$},\,\,\,i=1,\ldots,n,
\end{array}
\right.
\end{equation}
and  choose one element $\Theta\in \partial_B\Pi_{\Re^n_+}(v)$ as follows:
\begin{equation}\label{def-theta}
\Theta={\rm Diag}(\theta)\,\,\,\hbox{\it with}\,\,\,
\theta_i=\left\{\begin{array}{ll}
1, & \hbox{if $v_i>0$},\\[3pt]
0, & \hbox{if $v_i\leq 0$},\,\,\,i=1,\ldots,n.
\end{array}
\right.
\end{equation}
%\end{description}
\end{remark}

\section{A Dual based Semismooth Newton Method }\label{sec:DSSN}
By introducing an auxiliary variable, problem \eqref{RLasso} can be reformulated as
\begin{equation}\label{RLasso-P}
\begin{array}{cl}
\underset{x,y}{\min} & \frac{1}{2}\|y\|^2+\frac{\lambda}{2}\|x\|^2+\varphi(x)\\
{\rm s.t.} & Ax-y-b=0.
\end{array}
\end{equation}
The Lagrangian function associated with the above problem is given by
$$
\begin{array}{rl}
\mathcal{L}(x,y,z) & = \frac{1}{2}\|y\|^2+\frac{\lambda}{2}\|x\|^2+\varphi(x)+\langle Ax-y-b,z\rangle\\[6pt]
~ & =\frac{1}{2}\|y-z\|^2-\frac{1}{2}\|z\|^2+\varphi(x)+\frac{\lambda}{2}\|x+\lambda^{-1}A^Tz\|^2
-\frac{1}{2}\|\lambda^{-1}A^Tz\|^2-b^Tz.
\end{array}
$$
Then, we can obtain that
$$
\begin{array}{rl}
\underset{x,y}{\min}~\mathcal{L}(x,y,z) & =\underset{y}{\min}\{\frac{1}{2}\|y-z\|^2-\frac{1}{2}\|z\|^2\}\\
&~~+\underset{x}{\min} \{ \varphi(x)+\frac{\lambda}{2}\|x+\lambda^{-1}A^Tz\|^2-\frac{\lambda}{2}\|\lambda^{-1}A^Tz\|^2\}-b^Tz\\[7pt]
& = -\frac{1}{2}\|z\|^2+\Phi_{\lambda^{-1}\varphi}(-\lambda^{-1}A^Tz)-\frac{\lambda}{2}\|\lambda^{-1}A^Tz\|^2-b^Tz,
\end{array}
$$
where $\Phi_{\lambda^{-1}\varphi}(\cdot)$ is the Moreau-Yosida regularization of $\lambda^{-1}\varphi$.
Consequently, the dual problem (minimization form)  of \eqref{RLasso-P} takes the following form:
\begin{equation}\label{RLasso-D}
\underset{z}{\min}~~h(z):= \frac{1}{2}\|z\|^2-\Phi_{\lambda^{-1}\varphi}(-\lambda^{-1}A^Tz)+\frac{\lambda}{2}\|\lambda^{-1}A^Tz\|^2+b^Tz.
\end{equation}
Then the optimal solution $x^{*}$ of problem \eqref{RLasso} can  be obtained by
\begin{equation}\label{def-opt-x}
x^{*} = {\rm Prox}_{\lambda^{-1}\varphi}(-\lambda^{-1}A^Tz^*),
\end{equation}
where $z^*$ is the unique optimal solution of problem \eqref{RLasso-D}.

Since $h(z)$ is a strongly convex and continuously differentiable function, we know that the unique solution of \eqref{RLasso-D} can be obtained by solving the following linear equation:
\begin{equation}\label{new-sys}
\nabla h(z) =0,
\end{equation}
where
$\nabla h(z)=z+b - A{\rm Prox}_{\lambda^{-1}\varphi}(-\lambda^{-1}A^Tz).$
Furthermore, define

\begin{equation}\label{def-Jac}
\widehat{\partial}_B(\nabla h)(z):=\left\{ I+ \lambda^{-1}AWA^T\in\Re^{m\times m}|\,W\in \widehat{\partial}_B{\rm Prox}_{\lambda^{-1}\varphi}(-\lambda^{-1}A^Tz)\right\},
\end{equation}
where $\widehat{\partial}_B{\rm Prox}_{\lambda^{-1}\varphi}$ denotes the generalized Jacobian of ${\rm Prox}_{\lambda^{-1}\varphi}$.
If $\varphi(\cdot)=\mu\|\cdot\|_1$ or $\varphi(\cdot)=\mathbb{I}_{\Re^{n}_+}(\cdot)$, the generalized Jacobian  $\widehat{\partial}_B{\rm Prox}_{\lambda^{-1}\varphi}$
is exactly the B-subdifferential given by \eqref{def-Bsub-L1} or \eqref{def-Bsub-pos}. If $\varphi(\cdot)=\mu\|\cdot\|_1+\mathbb{I}_{\Re^{n}_+}(\cdot)$,  the generalized Jacobian  $\widehat{\partial}_B{\rm Prox}_{\lambda^{-1}\varphi}$ is defined by \eqref{def-subdiff-10}.

In order to design an implementable semismooth Newton method for solving linear system \eqref{new-sys}, we introduce the following proposition which shows that $\widehat{\partial}_B(\nabla h)$ defined by \eqref{def-Jac} can be used as a surrogate generalized Jacobian of $\nabla h$. Moreover, the following result also plays an important role in establishing  the convergence results of the dual based semismooth Newton
method  (Algorithm \ref{alg:ssn}) for solving problem \eqref{RLasso}.
\begin{proposition}\label{prop-semi}
The set-valued function $\widehat{\partial}_B(\nabla h)$ defined by \eqref{def-Jac} is a nonempty compact valued upper-semicontinuous multi-function and for any $V\in\widehat{\partial}_B(\nabla h)(z)$, $V$ is a symmetric positive definite matrix and for $z'\rightarrow z$, it holds that
\begin{equation}\label{prox-h-semi}
\nabla h(z')-\nabla h(z)-V(z'-z)=\mathcal{O}(\|z'-z\|^2),\,\,\forall\,V\in \widehat{\partial}_B(\nabla h(z')).
\end{equation}
\end{proposition}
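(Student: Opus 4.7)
The plan is to reduce each of the three assertions to the corresponding assertion for the proximal-map surrogate subdifferential $\widehat{\partial}_B{\rm Prox}_{\lambda^{-1}\varphi}$, which is already under control by Lemmas \ref{lem-B-sub}--\ref{lem-semismooth}. The guiding observation is that $\nabla h(z) = z + b - A\,{\rm Prox}_{\lambda^{-1}\varphi}(-\lambda^{-1}A^Tz)$ is an affine composition of ${\rm Prox}_{\lambda^{-1}\varphi}$ with the continuous linear map $z \mapsto -\lambda^{-1}A^Tz$, followed, at the Jacobian level, by the affine map $W \mapsto I + \lambda^{-1}AWA^T$.

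For the topological claim (nonempty, compact valued, upper-semicontinuous) I would invoke Lemma \ref{lem-semismooth} directly in the sum case $\varphi = \mu\|\cdot\|_1 + \mathbb{I}_{\Re^n_+}$. In the pure cases $\varphi = \mu\|\cdot\|_1$ and $\varphi = \mathbb{I}_{\Re^n_+}$, the set $\widehat{\partial}_B{\rm Prox}_{\lambda^{-1}\varphi}$ coincides with the classical B-subdifferential of a globally Lipschitz piecewise linear map, and the three properties are standard (see \cite{facchinei2007finite}). Composing with the fixed linear map $z \mapsto -\lambda^{-1}A^Tz$ preserves them, and pushing forward by the continuous affine operator $W \mapsto I + \lambda^{-1}AWA^T$ also preserves nonemptiness, compactness, and upper-semicontinuity. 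For symmetry and positive definiteness of $V = I + \lambda^{-1}AWA^T$, recall that $W$ is symmetric positive semidefinite (Lemma \ref{lem-semismooth} in the sum case; direct inspection of the diagonal $0/1$ matrices in Lemma \ref{lem-B-sub} in the pure cases). Then $AWA^T$ is symmetric positive semidefinite, and adding $I$ yields a symmetric positive definite matrix.

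For the semismooth-like estimate \eqref{prox-h-semi}, I would set $u := -\lambda^{-1}A^Tz$, $u' := -\lambda^{-1}A^Tz'$, and write $V = I + \lambda^{-1}AWA^T$ with $W \in \widehat{\partial}_B{\rm Prox}_{\lambda^{-1}\varphi}(u')$. Using $A^T(z'-z) = -\lambda(u'-u)$, the identity terms and the $AWA^T$ term combine so that a direct expansion collapses to
\begin{equation*}
\nabla h(z') - \nabla h(z) - V(z'-z) = -A\bigl[{\rm Prox}_{\lambda^{-1}\varphi}(u') - {\rm Prox}_{\lambda^{-1}\varphi}(u) - W(u'-u)\bigr].
\end{equation*}
Applying \eqref{lem-semi-smth} in the sum case (or the strong semismoothness of Lemma \ref{lem-prox-semi} in the pure cases) bounds the bracket by $\mathcal{O}(\|u'-u\|^2)$; boundedness of $\|A\|$ and $\|A^T\|$ then converts this to $\mathcal{O}(\|z'-z\|^2)$.

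The main obstacle I anticipate is bookkeeping rather than analysis: one must select $W$ at $u'$ (not at $u$) so that the semismoothness estimate applies with the correct base point, and one must verify uniformly across the three cases for $\varphi$ that the surrogate subdifferential in \eqref{def-subdiff-10} behaves like an honest B-subdifferential for the purposes of the chain-rule identity above. No new analytic ingredient beyond Section \ref{sec:preliminary} should be required.
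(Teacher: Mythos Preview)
Your proposal is correct and follows essentially the same route as the paper: reduce all three assertions to the corresponding properties of $\widehat{\partial}_B{\rm Prox}_{\lambda^{-1}\varphi}$ established in Lemmas~\ref{lem-prox-semi} and~\ref{lem-semismooth}, then transport them through the affine composition $z\mapsto -\lambda^{-1}A^Tz$ and $W\mapsto I+\lambda^{-1}AWA^T$. The only difference is presentational: the paper outsources the chain-rule step to \cite[Proposition~7]{li2018efficiently}, whereas you carry out the expansion $\nabla h(z')-\nabla h(z)-V(z'-z)=-A[\,{\rm Prox}(u')-{\rm Prox}(u)-W(u'-u)\,]$ explicitly, which makes the argument self-contained.
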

\begin{proof}
 From the properties of the Moreau-Yosida regularization, we know that the function $h$ defined in \eqref{RLasso-D} is continuously differentiable with Lipschitz continuous gradient. This together with Lemma \ref{lem-prox-semi}, Lemma \ref{lem-semismooth} and \cite[Proposition 7]{li2018efficiently}
implies equation \eqref{prox-h-semi} holds. The proof is completed.
\end{proof}

Now, we are ready to describe the dual based semismooth Newton
(DSSN) method, which is essentially the
semismooth Newton method for solving \eqref{new-sys}.

{\begin{algorithm}[H]
\caption{\bf {Dual based Semismooth Newton (DSSN) Method  for Solving \eqref{RLasso} }}
\label{alg:ssn}
\vspace{1mm}
Given $\varrho \in (0,1/2)$, $\bar{\eta} \in (0,1)$, $\zeta \in (0,1]$, and $\beta \in (0,1)$. Choose $z^0\in\Re^m$. For $j=0,1,\dots$, perform the following steps in each iteration.
\begin{description}
\item[Step 1.] (Newton direction) Choose one specific matrix  $V_j\in\widehat{\partial}_B(\nabla h)(z^j)$. Specifically, we choose
an element $W_j\in {\partial}_B{\rm Prox}_{\lambda^{-1}\varphi}(-\lambda^{-1}A^Tz^j)$ and then
$ V_j= I+ \lambda^{-1}{A} W_j {A}^T\in\widehat{\partial}_B(\nabla h)(z^j) $. Solve the following linear system
\begin{equation}\label{newton-sys}
V_j d = -\nabla h(z^j)
\end{equation}
exactly or by the conjugate gradient (CG) algorithm to find $d^j$ such that
$\| V_j d^j+\nabla h(z^j)\| \leq \min(\bar{\eta},\|\nabla h(z^j)\|^{1+\zeta})$.
\item[Step 2.] (Line search) Set $ \alpha_j = \beta^{l_j}$, where $l_j$ is the smallest nonnegative integer $l$ for which
\begin{equation}\label{ssn-LinSear}
h(z^j + \beta^l d^j) \leq h(z^j) + \varrho\beta^l \langle \nabla h(z^j),d^j \rangle.
\end{equation}
\item[Step 3.] Set $ z^{j+1} = z^j + \alpha_j d^j $  and $x^{j+1} = {\rm Prox}_{\lambda^{-1}\varphi}(-\lambda^{-1}A^Tz^{j+1})$.		
\end{description}
\end{algorithm}}	
The following theorem provides the global convergence of the sequence $\{(z^j,x^j)\}$, the local superlinear convergence rate of $\{z^j\}$, and the local R-superlinear convergence rate of $\{x^j\}$.
\begin{theorem}\label{th-conv}
Let $\{(z^j,x^j)\}$ be the sequence generated by Algorithm \ref{alg:ssn}. Then the following hold:
\begin{itemize}[leftmargin=0.7cm]
\item[\rm (a)] The sequence $\{z^j\}$ is well-defined and  converges globally to the unique solution ${z}^*$ of \eqref{new-sys}. Moreover, the convergence rate is at least superlinear:
\begin{equation}\label{th-rate-z}
\|z^{j+1} - {z}^*\| = \mathcal{O}(\|z^j - {z}^*\|^{1+\zeta}),
\end{equation}
where $\zeta\in (0,1]$ is the parameter given in Algorithm \ref{alg:ssn}.
\item[\rm(b)] The sequence $\{x^j\}$ converges globally to the unique solution $x^*$ of problem \eqref{RLasso} with at least R-superlinear convergence rate.
\end{itemize}
\end{theorem}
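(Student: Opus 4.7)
The plan is to separate the two claims. For (a), I would invoke standard inexact/semismooth Newton theory; for (b), I would transfer the rate from the dual sequence $\{z^j\}$ to the primal sequence $\{x^j\}$ using the nonexpansiveness of the proximal mapping composed with $-\lambda^{-1}A^T$.

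I would start (a) by recording basic properties of $h$: since $h(z)=\tfrac{1}{2}\|z\|^2+\tfrac{1}{2\lambda}\|A^Tz\|^2-\Phi_{\lambda^{-1}\varphi}(-\lambda^{-1}A^Tz)+b^Tz$ is strongly convex (because of the $\tfrac{1}{2}\|z\|^2$ term) and continuously differentiable with Lipschitz gradient, the equation $\nabla h(z)=0$ has a unique root $z^*$. Next, every $V_j\in\widehat{\partial}_B(\nabla h)(z^j)$ satisfies $I\preceq V_j\preceq (1+\lambda^{-1}\|A\|^2)I$, uniformly in $j$, because each $W\in\widehat{\partial}_B{\rm Prox}_{\lambda^{-1}\varphi}(\cdot)$ is symmetric with spectrum in $[0,1]$ (Lemma~\ref{lem-B-sub} and Lemma~\ref{lem-semismooth}). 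Hence the Newton system \eqref{newton-sys} has a unique solution and a CG iterate satisfying the inexactness criterion is well-defined; moreover, writing $r_j:=V_jd^j+\nabla h(z^j)$ with $\|r_j\|\le\bar\eta$, the estimate $\langle \nabla h(z^j),d^j\rangle\le -\tfrac{1}{2}\|V_j^{-1/2}\nabla h(z^j)\|^2+\tfrac{1}{2}\|V_j^{-1/2}r_j\|^2$ together with the uniform lower bound on $V_j^{-1}$'s smallest eigenvalue yields a sufficient descent bound of the form $\langle\nabla h(z^j),d^j\rangle\le -c_1\|\nabla h(z^j)\|^2$ whenever $\bar\eta$ is taken small enough (which can always be arranged, or one can sharpen the inexactness to $\min(\bar\eta\|\nabla h(z^j)\|,\|\nabla h(z^j)\|^{1+\zeta})$ as is customary). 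Consequently the Armijo backtracking \eqref{ssn-LinSear} terminates after finitely many trials. Standard Zoutendijk-type arguments for strongly convex objectives then give $\nabla h(z^j)\to 0$, and strong convexity of $h$ upgrades this to $z^j\to z^*$.

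For the local rate in (a), I would appeal to the semismoothness statement \eqref{prox-h-semi} of Proposition~\ref{prop-semi}: once $z^j$ is sufficiently close to $z^*$, the full Newton step is accepted by the line search (the standard Dennis--Mor\'e / Facchinei--Pang argument, using that any $V\in\widehat{\partial}_B(\nabla h)(z^j)$ serves as a surrogate Jacobian satisfying $\nabla h(z^j)-\nabla h(z^*)=V(z^j-z^*)+O(\|z^j-z^*\|^2)$), so $\alpha_j\equiv 1$ eventually. Combining the inexact Newton residual $\|V_jd^j+\nabla h(z^j)\|\le\|\nabla h(z^j)\|^{1+\zeta}$ with \eqref{prox-h-semi} and the uniform bound $\|V_j^{-1}\|\le 1$ gives
\begin{equation*}
\|z^{j+1}-z^*\|=\|z^j+d^j-z^*\|\le \|V_j^{-1}\|\,\bigl(\|\nabla h(z^j)-\nabla h(z^*)-V_j(z^j-z^*)\|+\|V_jd^j+\nabla h(z^j)\|\bigr)=O(\|z^j-z^*\|^{1+\zeta}),
\end{equation*}
which is \eqref{th-rate-z}; Lipschitz continuity of $\nabla h$ is used to pass from the residual bound in $\nabla h$ to one in $\|z^j-z^*\|$.

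Part (b) I expect to be short. Since $x^{j+1}={\rm Prox}_{\lambda^{-1}\varphi}(-\lambda^{-1}A^Tz^{j+1})$ and $x^*={\rm Prox}_{\lambda^{-1}\varphi}(-\lambda^{-1}A^Tz^*)$, the nonexpansiveness of the proximal mapping gives $\|x^{j+1}-x^*\|\le\lambda^{-1}\|A\|\,\|z^{j+1}-z^*\|$. Global convergence $x^j\to x^*$ is immediate, and since $\{z^{j}-z^*\}$ converges to zero Q-superlinearly by (a), $\{x^{j}-x^*\}$ is dominated in norm by a Q-superlinearly null sequence, which is the definition of R-superlinear convergence.

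The main obstacle I anticipate is checking that the inexact Newton direction produced by CG is a genuine descent direction \emph{uniformly} enough for the Zoutendijk-style global-convergence argument, rather than just eventually; if the prescribed forcing term $\min(\bar\eta,\|\nabla h(z^j)\|^{1+\zeta})$ is too loose when $\|\nabla h(z^j)\|$ is large, one may need either to restrict $\bar\eta$ further or to invoke the uniform conditioning $I\preceq V_j\preceq (1+\lambda^{-1}\|A\|^2)I$ more carefully. Everything else is a standard application of the semismooth Newton machinery to the Moreau-regularized dual, which is exactly the design advantage the paper is exploiting.
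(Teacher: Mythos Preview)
Your proposal is correct and matches the paper's approach: for (a) the paper simply invokes strong convexity of $h$, Proposition~\ref{prop-semi}, and the argument of \cite[Theorem 3]{li2018efficiently} (which your sketch essentially unpacks), and for (b) it uses exactly your nonexpansiveness bound $\|x^{j+1}-x^*\|\le\lambda^{-1}\sigma_{\max}(A)\|z^{j+1}-z^*\|$ to transfer the rate. The descent-direction subtlety you flag is real but is resolved in the cited reference via the same uniform conditioning $I\preceq V_j$ that you identify.
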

{\begin{proof}Since the function $h$ is strongly convex, we can obtain result $(a)$ from Proposition \ref{prop-semi} by following the same routine in the proof of \cite[Theorem 3]{li2018efficiently}.
Now, we come to result $(b)$. It holds from the definition of $x^j$ and  \eqref{def-opt-x} that
$$
\begin{array}{ll}
\|x^{j+1}-x^*\|&=\|{\rm Prox}_{\lambda^{-1}\varphi}(-\lambda^{-1}A^Tz^{j+1})-{\rm Prox}_{\lambda^{-1}\varphi}(-\lambda^{-1}A^Tz^{*})\|\\[2mm]
~& \leq \lambda^{-1}\sigma_{\max}(A)\|z^{j+1}-z^*\|,
\end{array}
$$
where $\sigma_{\max}(A)$ is the largest singular value of $A$.  This together with Result $(a)$ and \eqref{th-rate-z} implies that the sequence $\{x^j\}$ converges globally to the unique solution $x^*$ and there exists a positive scaler $\kappa$ such that
$$
\|x^{j+1}-x^*\|\leq \kappa\|(z^{j}-z^*)^{1+\zeta}\|.
$$
Consequently, for sufficient large $j$, there exists $\kappa'\in(0,1)$ such that
$$
\|x^{j+1}-x^*\|\leq \kappa'\|z^{j}-z^*\|.
$$
This completes the proof.
\end{proof}

\begin{remark}\label{remark-1} We mentioned in the introduction that solving the sparse Tikhonov regularization problem can be viewed as solving a subproblem of the  S{\footnotesize SNAL} \cite{li2018efficiently,li2016highly,Zhang2018efficient}. Here, we give some comments on this point by taking the elastic net Lasso \eqref{model-Elass} for example.
Observe that, for the elastic net Lasso, problem \eqref{RLasso-D} can be viewed as a subproblem corresponding to each iteration of S{\footnotesize SNAL} which was proposed in \cite{li2016highly}. Specifically, from the discussions presented in \cite[Section 4]{rockafellar1976augmented}, we know that the outer iteration scheme of  S{\footnotesize SNAL} is equivalent to the proximal point algorithm (PPA) described as follows: given positive scalars $\sigma_k\uparrow\sigma_{\infty}\leq \infty$, the $k$-th iteration of PPA for Lasso problem is given by
$$
x^{k+1}=\arg\min\limits_{x\in\Re^n}\frac{1}{2}\|Ax-b\|^2+\mu\|x\|_1+\frac{1}{2\sigma_k}\|x-x^k\|^2.
$$
Therefore, the optimization problem corresponding to the $k$-th iteration of PPA  can reduce to the elastic net model when $x^k=0$ and $\sigma_k=\lambda^{-1}$.
\end{remark}

\subsection{Techniques for Linear System}\label{sec-newton-sys}
The linear system \eqref{newton-sys} in Algorithm \ref{alg:ssn} is the most time-consuming part. We take the nonnegative elastic net \eqref{model-NElass} for example, i.e.,
$$
\varphi(x)=\mu\|x\|_1+\mathbb{I}_{\Re^n_+}(x).
$$
In this case, for any given $\tilde{z}\in\Re^n$, the element $\Theta\in\partial_B{\rm Prox}_{\varphi}(\tilde{z})$ is chosen as
\begin{equation}\label{jacob}
\Theta = \Theta_1\Theta_2,
\end{equation}
where $\Theta_1$ and $\Theta_2$ are given by \eqref{def-prox-L1} and \eqref{def-theta}, respectively. Since both $\Theta_1$ and $\Theta_2$ are diagonal matrices, the matrix $\Theta$ is a diagonal matrix, i.e. $\Theta:={\rm Diag}(\theta)$. Then, we define
\begin{equation}\label{def-indx-P}
\mathcal{I}(\tilde{z}):=\{i:\,\theta_{i}=1\},\,\,\,\mathcal{I}_0(\tilde{z}):=\{i:\,\theta_{i}=0\},
\end{equation}
where $\tilde{z}:= -\lambda^{-1}A^Tz$.
Therefore, the linear system \eqref{newton-sys} can be simplified as follow:
\begin{equation}\label{new-sys-simple}
\begin{array}{l}
\left(I_m+\lambda^{-1}A_{\mathcal{I}(\tilde{z})}A^T_{\mathcal{I}(\tilde{z})}\right)d= -\nabla h(z),
\end{array}
\end{equation}
where $A_{\mathcal{I}(\tilde{z})}\in\Re^{m\times |{\mathcal{I}(\tilde{z})}|}$.

Since the coefficient matrix of equation \eqref{new-sys-simple} is symmetric and positive definite, there are many solution methods for finding the solution of the linear system, see e.g., \cite{golub2012matrix,lam2018fast}.
Next, we recall some techniques that can usually be used to increase the computation efficiency.% for finding the solution of \eqref{new-sys-simple}.
\begin{itemize}[leftmargin=5mm]
\item[-] If $m<<|{\mathcal{I}(\tilde{z})}|$ and $m$ is moderate, the linear equations \eqref{new-sys-simple} can be solved by using Cholesky factorization \cite[Theorem 4.2.7]{golub2012matrix} of the coefficient matrix.

\item[-] If $|{\mathcal{I}(\tilde{z})}|<<m$ and $|{\mathcal{I}(\tilde{z})}|$ is moderate, the Sherman-Morrison-Woodbury formula \cite[Page 50]{golub2012matrix} can be used to reduce the computational efforts. Specifically,
    $$
    \left(I_m+\lambda^{-1}A_{\mathcal{I}(\tilde{z})}A^T_{\mathcal{I}(\tilde{z})}\right)^{-1}=I_m-\lambda^{-1}A_{\mathcal{I}(\tilde{z})}
    (I+\lambda^{-1}A^T_{\mathcal{I}(\tilde{z})}A_{\mathcal{I}(\tilde{z})})^{-1}A^T_{\mathcal{I}(\tilde{z})}.
    $$
Therefore, the computation cost of solving linear system \eqref{new-sys-simple} can be reduced by applying Cholesky factorization of the matrix $I+\lambda^{-1}A^T_{\mathcal{I}(\tilde{z})}A_{\mathcal{I}(\tilde{z})}\in\mathbb{S}^{|{\mathcal{I}(\tilde{z})}|}$.

\item[-] If both $|{\mathcal{I}(\tilde{z})}|$ and $m$ are large, the practical conjugated gradient algorithm \cite[Algorithm 10.2.1]{golub2012matrix} can be applied to solve the linear system \eqref{new-sys-simple} approximately.

\end{itemize}

\section{Numerical Experiments}\label{sec:num}
In this section, we compare the numerical performance of the dual based semismooth Newton method (DSSN) and the primal based semismooth Newton method (PSSN) on the instances from UCI data repository \cite{lichman2013uci} . In the numerical experiments, the data sets {\it mgp7}, {\it pyrim5}, and {\it bodyfat7} are the expended data sets by using the polynomial basis function. For the details of data processing, we refer to \cite{huang2010predicting,li2016highly}.

The first order optimal condition of problem \eqref{RLasso} can be written as
\begin{equation}\label{def-F}
\Psi(x):=x-{\rm Prox}_{\varphi}\left(x-\lambda x- A^T(Ax-b)\right)=0.
\end{equation}
 Based on the above condition, we measure the accuracy of the approximate solution of problem \eqref{RLasso} by using the following residual:
$$
\eta = \|x-{\rm Prox}_{\varphi}\left(x-\lambda x- A^T(Ax-b)\right)\|.
$$
For a given tolerance $\varepsilon=10^{-6}$, all the tested algorithms  will be stopped when $\eta\leq\varepsilon$ or the maximum number 200 of iterations is  reached. Inspired by the parameter used in \cite{zou2005regularization}, we choose $\lambda\in\{100,10,1,0.1,0.01\}$. For testing purpose, the parameter $\mu$ is chosen as
$$
\mu = \mu_c\|A^Tb\|_{\infty}.
$$
All our numerical results are obtained by running M{\footnotesize{ATLAB}} R2018b on a desktop (4-core, i5-7300 CPU @2.60GHz, 8.00GB of RAM).

\subsection{Primal based Semismooth Newton Method}
Note that $\Psi(x)$ defined by \eqref{def-F} is a locally Lipschitz continuous function and one can employ the following semismooth Newton method (SSN):
$$
x^{k+1} = x^k-V^{-1}_k \Phi(x^k),
$$
where $V_k\in\widehat{\partial}_B \Psi(x^k)$ with the mapping $\widehat{\partial}_B \Psi:\Re^n\rightrightarrows\mathbb{S}^n$ being defined by
$$
\widehat{\partial}_B \Psi(\bar{x}) :=\left\{I-\Theta\big((1-\lambda)I-A^TA\big)\in\mathbb{S}^n:\Theta\in\widehat{\partial}_B{\rm Prox}_{\varphi}(\bar{x})  \right\},\,\,\,\forall\,\bar{x}\in\Re^n.
$$
Inspired by the smoothing Newton method (see e.g., \cite{facchinei2007finite,sun2001solving}) and the solution method used in \cite{zhao2018semismooth}, we define the merit function
\begin{equation}\label{def-merit}
r(x) = \|\Psi(x)\|^2.
\end{equation}
Therefore, the primal based semismooth Newton method for solving problem \eqref{RLasso} can be described by Algorithm \ref{alg:ssn-P}.
{\begin{algorithm}[H]
\caption{\bf Primal based Semismooth Newton (PSSN) Method for Solving \eqref{RLasso} }
\label{alg:ssn-P}
\vspace{1mm}
Given $\varrho\in (0,1/2)$, and $\beta \in (0,1)$. Choose $y^0\in\Re^m$. For $k=0,1,\dots$, perform the following steps in each iteration.
\begin{description}
\item[Step 1.] (Newton direction) Choose $\Theta\in\partial_B{\rm Prox}_{\varphi}(y)$ with $y=x^j-\lambda x^j- A^T(Ax^j-b)$.  Compute $d^j$ by solving
    \begin{equation}\label{LS-P}
    \big(I-\Theta\big((1-\lambda)I-A^TA\big)\big)d=-\Psi(x^k).
    \end{equation}
\item[Step 2.] (Line search) Set $ \alpha_j = \beta^{l_j}$, where $l_j$ is the smallest nonnegative integer $l$ for which
$$
r(x^j + \beta^l d^j) \leq r(x^j) + \varrho\beta^l \langle \nabla r(x^j),d^j \rangle.
$$
\item[Step 3.] Set $ x^{j+1} = x^j + \alpha_j d^j $.		
\end{description}
\end{algorithm}}	
Similar to Algorithm \ref{alg:ssn}, the linear system \eqref{LS-P} in Algorithm \ref{alg:ssn-P} is the most time-consuming part. We also take the nonnegative elastic net for example. In this case, for any given $y\in\Re^n$, the element $\Theta\in\partial_B{\rm Prox}_{\varphi}({y})$ is chosen as \eqref{jacob}. Define
\begin{equation}\label{def-indx-P}
\mathcal{I}({y}):=\{i:\,\theta_{i}=1\},\,\,\,\mathcal{I}_0({y}):=\{i:\,\theta_{i}=0\},
\end{equation}
where
$$y=x-\lambda x- A^T(Ax-b).$$
Therefore, the linear system \eqref{LS-P} can be simplified as follow:
\begin{align}
&d_{\mathcal{I}_0({y})} = -\Psi_{\mathcal{I}_0({y})}(x^k),\nonumber\\[6pt]
&\big(\lambda {I}_{\mathcal{I}(y)}+A^T_{\mathcal{I}({y})}A_{\mathcal{I}({y})}\big)d_{\mathcal{I}({y})} =  -\Psi_{\mathcal{I}({y})}(x^k)-A^T_{\mathcal{I}({y})}A_{\mathcal{I}_0({y})}d_{\mathcal{I}_0({y})}. \label{pri-new-sys}
\end{align}
Note that the linear system \eqref{pri-new-sys} can be solved by using the similar techniques presented in  subsection \ref{sec-newton-sys}.
\begin{remark}
It follows from \cite[Theorem 3.2]{qi1993nonsmooth} that the PSSN method without line search converges locally quadratically to the unique solution of problem \eqref{RLasso}. The numerical experience shows that the line search step (Step 2 in Algorithm \ref{alg:ssn-P}) can increase the convergence speed. However, since the smoothness of the merit function $r(\cdot)$ defined by \eqref{def-merit} is unclear, it is still a challenge problem that whether the PSSN method can converge globally \cite[Theorem 8.3.15]{Facchinei2007}.
\end{remark}

\subsection{Elastic Net Lasso}
Motivated by \cite{tibshirani2012strong}, the elastic net Lasso problem \eqref{model-Elass} can be reformulated into the following form which is exactly the Lasso problem \cite{tibshirani1996regression}:
\begin{equation}\label{Lasso-R}
\min\limits_{x\in\Re^n}~ f(x):=\frac{1}{2}\|
\widetilde{A}x-\tilde{b}\|^2+\mu\|x\|_1,
\end{equation}
where $\widetilde{A}$ and $\tilde{b}$ are given by \eqref{def-new-Ab}. Therefore, the semismooth Newton based augmented Lagrangian method (S{\footnotesize SNAL}) proposed by \cite{li2016highly} can be used to solve
problem \eqref{RLasso-R}. The high efficiency of S{\footnotesize SNAL}\footnote{Matlab code is available at: \url{http://www.math.nus.edu.sg/~ mattohkc/SuiteLasso.html}} for solving the Lasso problem has been convincingly demonstrated by various data sets by \cite{li2016highly}. Therefore, we take the performance of S{\footnotesize SNAL} for solving problem \eqref{Lasso-R} as a benchmark.

In this section, we present the performance of the DSSN, PSSN, and S{\footnotesize SNAL} for solving the elastic net problem \eqref{model-Elass} on the UCI datasets that mentioned at the beginning of this section.
In order to test the robustness of algorithms with respect to the parameter $\lambda$, for each $\mu_c$,  we choose a sequence of parameters $\{\lambda_k\}$ which run from 1 to 0.1 and solve a sequence of optimization problems by using PSSN, DSSN and S{\footnotesize SNAL}. It should be mentioned that, for each $\mu_c$, the optimal solution of  optimization problem with $\lambda_k$ is taken as the initial point of the  problem with $\lambda_{k+1}$. These results are reported in Table \ref{Table:ELasso1}. In Table \ref{Table:ELasso2}, we select a grid of values of $\lambda\in\{100,10,1,0.1,0.01\}$ and $\mu=k\mu_c(2)$ (here, $\mu_c(2)$ is the second $\mu_c$ that used in Table \ref{Table:ELasso1} for each data set) with $k=100:-1:1$. In this table, for each data set, we only need to choose the initial point once and take optimal solution of the $l$-th problem (say $x^{l*}$) as the initial point of the next optimization problem.

From Tables \ref{Table:ELasso1} and \ref{Table:ELasso2}, we can see that the convergence speed of DSSN of all the test examples are superior to that of PSSN. In order to obtain the index sets \eqref{def-indx-P}, which can be used to reduce the computational effort, the $u=Ax$ and $A^Tu$ have to be computed. The computation of $A^TAu$ is at the cost of ${\rm O}(mn^2)$. However, for the DSSN, the main cost is in computing $A^Tz$ at ${\rm O}(mn)$. For most cases, especially for the higher dimentional problems, DSSN is also outperform the S{\footnotesize SNAL} for solving problem \eqref{Lasso-R}. The most possible reason is the formulation \eqref{Lasso-R} overlooks the advantage of the Tikhonov regularization in designing numerical algorithm and destroys the low-sample size and high-dimensional structure of data matrix $A$.

Furthermore, from the convergence result Theorem \ref{th-conv}, we know that a well-chosen initial point can greatly speed up the convergence.  Based on the results presented in Table \ref{Table:ELasso2}, two instances are chosen to illustrate that the DSSN  has the advantage over PSSN. Figure \ref{Fig:ELasso} shows that the solution gap between the two neighbouring dual optimization problems and the iteration numbers have a similar performance. Compare to the performance of PSSN, both the solution gap and the iteration numbers
are relatively small and stable, especially from the $80$-th to the $100$-th optimization problems.

{\small\begin{table}[H]
\centering
\caption{The performances of DSSN(D), PSSN(P) and {S\footnotesize{SNAL}}(L)  on selected  instances from UCI data repository. $\lambda=1:-0.1:0.1$
}\label{Table:ELasso1}\centering
\vskip 1.5mm
{\small \begin{tabular}{ccccc}
\hline
problem name     & $\mu_c$ &  nnz         &      time     & \multicolumn{1}{c}{error}\\
$(m,n) $       &   ~   & ({\rm max},{\rm min})             &    $D$ $|$ $P$ $|$ $L$     & $D$ $|$ $P$ $|$ $L$\\
\hline
\multirow{3}{*}{\parbox[c]{2.3cm}{Leukaemia\\(38,7129)}} & 1e-02  &  (104, 39) &  0.21 $|$ 0.41 $|$  5.42   &  2.23e-13 $|$ 1.21e-13 $|$ 5.31e-07 \\
 & 1e-03  &  (422, 105) &  0.16 $|$ 0.38 $|$  5.34   &  1.13e-13 $|$ 1.69e-13 $|$ 4.30e-07 \\
 & 1e-04  &  (1924, 423) &  0.24 $|$ 0.44 $|$  5.10   &  3.01e-13 $|$ 2.26e-08 $|$ 4.36e-07 \\\hline
 \multirow{3}{*}{\parbox[c]{2.3cm}{duke\\
 (44,7129)}} & 1e-02  &  (100, 48) &  0.11 $|$ 0.36 $|$  5.83   &  3.29e-13 $|$ 6.62e-14 $|$ 5.82e-07 \\
 & 1e-03  &  (387, 102) &  0.14 $|$ 0.42 $|$  5.56   &  5.76e-13 $|$ 1.15e-13 $|$ 5.09e-07 \\
 & 1e-04  &  (1872, 388) &  0.28 $|$ 0.51 $|$  6.04   &  4.26e-12 $|$ 3.00e-08 $|$ 5.89e-07 \\\hline
 \multirow{3}{*}{\parbox[c]{2.3cm}{colon-cancer\\
 (62,2000)}} & 1e-02  &  (73, 63) &  0.06 $|$ 0.20 $|$  2.57   &  2.74e-12 $|$ 8.46e-14 $|$ 4.65e-07 \\
 & 1e-03  &  (317, 82) &  0.11 $|$ 0.19 $|$  3.59   &  6.55e-12 $|$ 1.34e-13 $|$ 4.86e-07 \\
 & 1e-04  &  (1086, 318) &  0.24 $|$ 0.26 $|$  3.19   &  3.81e-11 $|$ 9.15e-13 $|$ 4.56e-07 \\\hline
 \multirow{3}{*}{\parbox[c]{2.3cm}{mpg7\\
 (392,3432)}} & 1e-02  &  (15, 14) &  0.16 $|$ 0.75 $|$  1.95   &  1.00e-08 $|$ 1.23e-12 $|$ 2.43e-07 \\
  & 1e-03  &  (78, 51) &  0.17 $|$ 0.57 $|$  7.26   &  4.86e-10 $|$ 3.61e-12 $|$ 6.12e-07 \\
   & 1e-04  &  (516, 214) &  0.54 $|$ 0.96 $|$  11.68   &  1.29e-10 $|$ 7.82e-12 $|$ 5.85e-07 \\\hline
 \multirow{3}{*}{\parbox[c]{2.3cm}{pyrim5\\
 (74,201376)}} & 1e-03  &  (422, 166) &  3.41 $|$ 18.24 $|$  152.50   &  2.19e-12 $|$ 1.95e-08 $|$ 5.16e-07 \\
 & 1e-04  &  (2677, 610) &  3.62 $|$ 10.43 $|$  270.93   &  9.45e-12 $|$ 1.09e-07 $|$ 6.58e-07 \\
 & 1e-05  &  (16306, 2803) &  5.14 $|$ 9.35 $|$  424.01   &  6.28e-11 $|$ 8.49e-08 $|$ 5.31e-07 \\\hline
 \multirow{3}{*}{\parbox[c]{2.3cm}{bodyfat7\\
 (252,116280)}} & 1e-03  &  (12, 3) &  2.39 $|$ 9.02 $|$  28.45   &  3.10e-12 $|$ 8.99e-14 $|$ 4.85e-07 \\
 & 1e-04  &  (196, 16) &  2.89 $|$ 11.10 $|$  75.92   &  1.30e-13 $|$ 9.75e-14 $|$ 4.96e-07 \\
 & 1e-05  &  (2734, 206) &  3.19 $|$ 12.90 $|$  129.51   &  2.95e-13 $|$ 3.57e-08 $|$ 5.80e-07 \\\hline
 \multirow{3}{*}{\parbox[c]{2.3cm}{triazines4\\
 (186,635376)}} & 1e-01  &  (321, 321) &  76.34 $|$ 478.34 $|$  25.39   &  1.62e-09 $|$ 6.26e-10 $|$ 1.01e-09 \\
  & 1e-02  &  (416, 387) &  67.53 $|$ 314.15 $|$  748.72   &  3.85e-09 $|$ 2.89e-09 $|$ 5.19e-07 \\
  & 1e-03  &  (1535, 853) &  61.10 $|$ 340.99 $|$  2630.55   &  2.85e-09 $|$ 2.61e-09 $|$ 6.54e-07 \\\hline 	
\end{tabular}}	
\end{table}	}

{\small\begin{table}[H]
\centering
\caption{The performances of DSSN(D), PSSN(P) and {S\footnotesize{SNAL}}(L) on selected instances from UCI data repository.
}\label{Table:ELasso2}\centering
\vskip 1.5mm
\begin{tabular}{ccccc}
\hline
problem name     & $\lambda$ &  nnz         &      time     & \multicolumn{1}{c}{error}\\
$(m,n) $        &   ~   & ({\rm max},{\rm min})             &   $D$ $|$ $P$ $|$ $L$     & $D$ $|$ $P$ $|$ $L$\\
\hline
\multirow{3}{*}{\parbox[c]{2.3cm}{Leukaemia\\(38,7129)}} & 1e+02  &  (6685, 1813) &  3.49 $|$ 4.82 $|$  0.38   &  2.21e-09 $|$ 1.29e-08 $|$ 6.44e-07 \\
 & 1e+01  &  (5336, 413) &  1.64 $|$ 3.85 $|$  0.72   &  2.56e-13 $|$ 1.10e-08 $|$ 4.45e-07 \\
 & 1e+00  &  (1924, 104) &  0.83 $|$ 3.14 $|$  1.13   &  1.28e-13 $|$ 1.08e-09 $|$ 5.76e-07 \\
 & 1e-01  &  (423, 38) &    0.80 $|$ 3.59 $|$  1.66   &  2.65e-13 $|$ 8.62e-10 $|$ 6.54e-07 \\
 & 1e-02  &  (105, 34) &    1.76 $|$ 9.22 $|$  2.49   &  3.30e-12 $|$ 1.34e-13 $|$ 6.47e-07 \\
% \hline
% & ~  &  ~ &                5.03 $|$ 19.80 $|$  6.00   &  ~ \\
 \hline
 \multirow{3}{*}{\parbox[c]{2.3cm}{duke\\(44,7129)}} & 1e+02  &  (6624, 1791) &  4.55 $|$ 5.70 $|$  2.53   &  7.71e-13 $|$ 2.61e-08 $|$ 6.67e-07 \\ & 1e+01  &  (5215, 381) &  1.96 $|$ 3.95 $|$  2.61   &  3.65e-13 $|$ 1.92e-08 $|$ 7.47e-07 \\ & 1e+00  &  (1872, 100) &  1.01 $|$ 3.61 $|$  2.68   &  2.90e-13 $|$ 8.08e-09 $|$ 6.99e-07 \\ & 1e-01  &  (388, 47) &  0.86 $|$ 4.16 $|$  2.77   &  7.86e-13 $|$ 2.11e-13 $|$ 6.32e-07 \\ & 1e-02  &  (102, 44) &  1.66 $|$ 8.93 $|$  2.87   &  5.96e-12 $|$ 9.21e-13 $|$ 5.96e-07 \\\hline \multirow{3}{*}{\parbox[c]{2.3cm}{colon-cancer\\(62,2000)}} & 1e+02  &  (1918, 988) &  2.83 $|$ 3.43 $|$  2.67   &  5.54e-13 $|$ 5.16e-10 $|$ 4.93e-07 \\ & 1e+01  &  (1792, 288) &  1.59 $|$ 1.95 $|$  2.47   &  1.35e-12 $|$ 2.76e-08 $|$ 5.30e-07 \\ & 1e+00  &  (1086, 73) &  0.83 $|$ 1.39 $|$  2.24   &  3.04e-12 $|$ 3.90e-09 $|$ 5.75e-07 \\ & 1e-01  &  (318, 63) &  0.72 $|$ 1.42 $|$  2.01   &  1.08e-11 $|$ 4.11e-09 $|$ 5.02e-07 \\ & 1e-02  &  (84, 56) &  1.41 $|$ 2.48 $|$  1.86   &  8.68e-11 $|$ 6.75e-13 $|$ 5.15e-07 \\\hline \multirow{3}{*}{\parbox[c]{2.3cm}{mpg7\\(392,3432)}} & 1e+02  &  (2557, 76) &  1.90 $|$ 4.56 $|$  2.11   &  7.32e-12 $|$ 5.56e-12 $|$ 5.42e-07 \\ & 1e+01  &  (1234, 20) &  1.06 $|$ 4.04 $|$  3.19   &  1.16e-11 $|$ 4.02e-12 $|$ 3.99e-07 \\ & 1e+00  &  (516, 14) &  0.93 $|$ 4.08 $|$  4.40   &  3.38e-10 $|$ 3.08e-12 $|$ 2.50e-07 \\ & 1e-01  &  (214, 14) &  1.04 $|$ 4.76 $|$  6.28   &  1.92e-08 $|$ 2.77e-12 $|$ 3.97e-07 \\ & 1e-02  &  (142, 13) &  1.61 $|$ 6.87 $|$  9.11   &  6.69e-09 $|$ 2.79e-12 $|$ 4.87e-07 \\\hline \multirow{3}{*}{\parbox[c]{2.3cm}{pyrim5\\(74,201376)}} & 1e+02  &  (143811, 12077) &  69.50 $|$ 121.99 $|$  44.24   &  8.72e-09 $|$ 1.07e-07 $|$ 5.67e-07 \\ & 1e+01  &  (74869, 2021) &  31.26 $|$ 70.58 $|$  109.45   &  3.88e-12 $|$ 7.21e-08 $|$ 6.00e-07 \\ & 1e+00  &  (16306, 422) &  18.93 $|$ 77.04 $|$  159.03   &  3.81e-12 $|$ 5.30e-08 $|$ 5.92e-07 \\ & 1e-01  &  (2803, 166) &  18.26 $|$ 106.93 $|$  201.44   &  7.33e-12 $|$ 1.83e-08 $|$ 5.52e-07 \\ & 1e-02  &  (558, 86) &  25.07 $|$ 402.05 $|$  274.29   &  2.59e-11 $|$ 3.82e-08 $|$ 4.05e-07 \\\hline \multirow{3}{*}{\parbox[c]{2.3cm}{bodyfat7\\(252,116280)}} & 1e+02  &  (94185, 2345) &  39.44 $|$ 86.42 $|$  249.40   &  1.63e-09 $|$ 1.87e-07 $|$ 3.03e-07 \\ & 1e+01  &  (28754, 142) &  18.83 $|$ 70.91 $|$  198.32   &  6.25e-09 $|$ 4.39e-08 $|$ 4.02e-07 \\ & 1e+00  &  (2734, 11) &  12.53 $|$ 58.21 $|$  163.72   &  2.96e-13 $|$ 7.50e-14 $|$ 4.65e-07 \\ & 1e-01  &  (206, 3) &  10.96 $|$ 56.01 $|$  133.28   &  5.39e-12 $|$ 6.01e-14 $|$ 4.15e-07 \\ & 1e-02  &  (35, 2) &  11.79 $|$ 73.53 $|$  75.97   &  2.44e-10 $|$ 3.52e-14 $|$ 4.44e-07 \\\hline \multirow{3}{*}{\parbox[c]{2.3cm}{triazines4\\(186,635376)}} & 1e+02  &  (17281, 345) &  145.23 $|$ 826.04 $|$  410.37   &  5.20e-11 $|$ 4.19e-09 $|$ 4.61e-07 \\ & 1e+01  &  (4050, 321) &  96.41 $|$ 623.63 $|$  850.76   &  3.75e-10 $|$ 7.84e-10 $|$ 3.76e-07 \\ & 1e+00  &  (1535, 321) &  119.17 $|$ 801.84 $|$  1535.02   &  3.19e-09 $|$ 1.37e-08 $|$ 3.78e-07 \\ & 1e-01  &  (853, 321) &  190.86 $|$ 1703.73 $|$  2467.18   &  3.34e-08 $|$ 1.19e-07 $|$ 3.95e-07 \\ & 1e-02  &  (653, 321) &  573.12 $|$ 6200.19 $|$  3722.94   &  2.73e-07 $|$ 6.05e-01 $|$ 4.38e-07 \\\hline	
\end{tabular}	
\end{table}	}

\begin{figure}[H]
\includegraphics[scale=0.5]{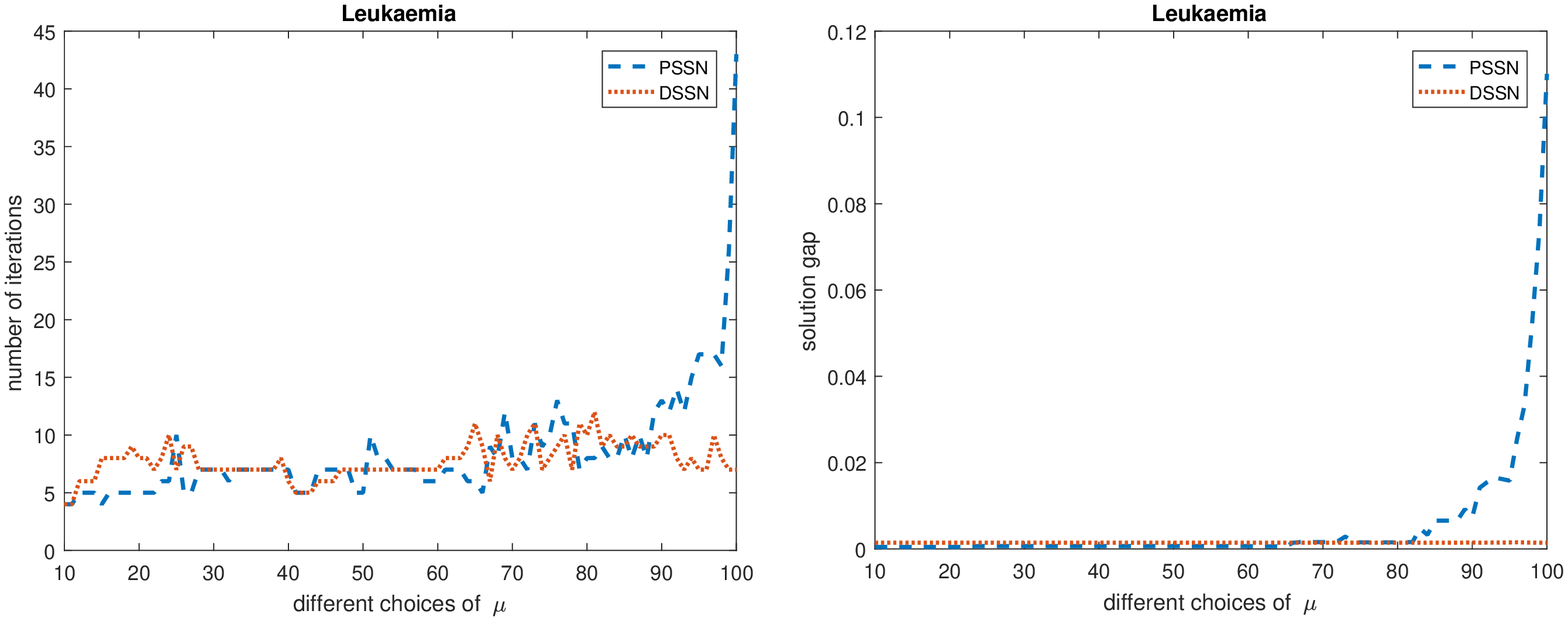}
\includegraphics[scale=0.5]{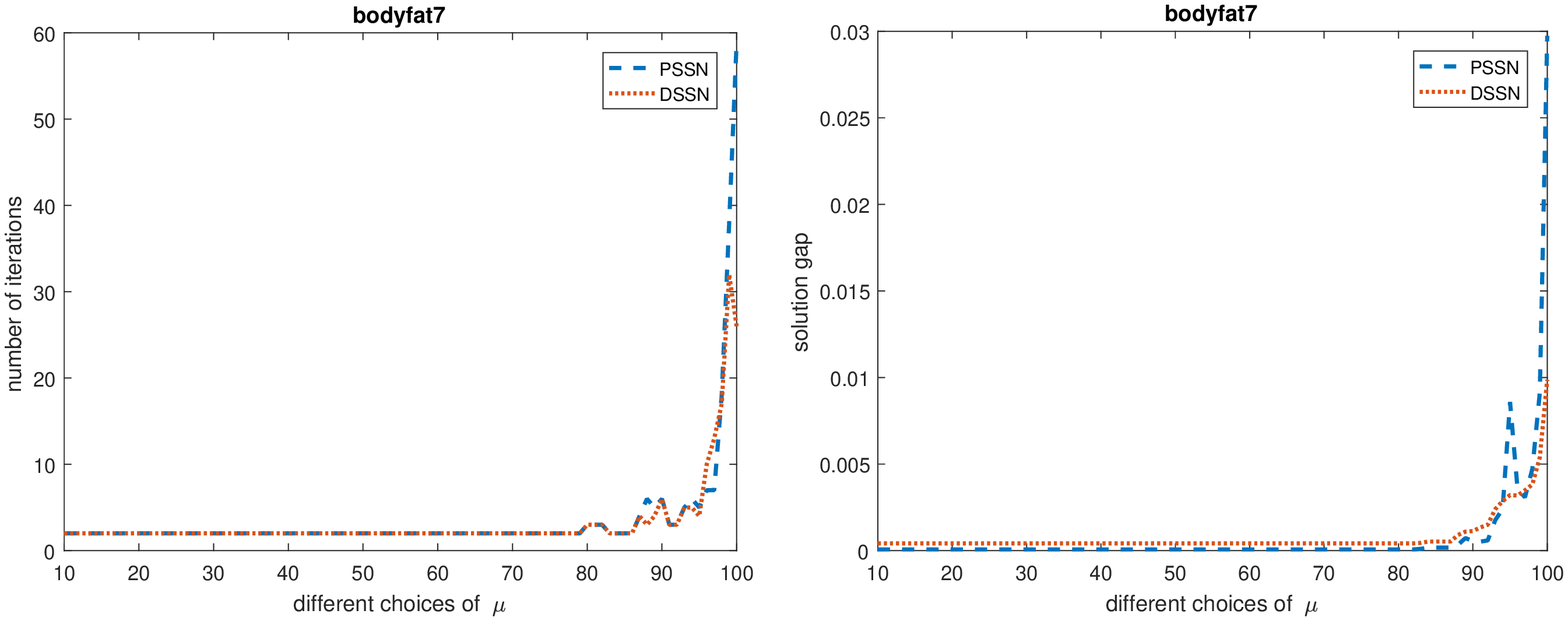}
\caption{Number of iterations and solution gap ( $\|x^{l*}-x^{(l-1)*}\|,\,\,l\geq 10$), on the {\it Leukaemia} and {\it bodyfat7} data sets. $\lambda=0.01$ .}\label{Fig:ELasso}
\end{figure}

\subsection{Nonnegative Elastic Net Lasso}
In this section, we present the performance of the DSSN and PSSN  for solving the nonegative elastic net problem \eqref{model-NElass} on the UCI datasets.\footnote{Due to the nonnegative constraints, the package {\it SuiteLasso} is can not be applied directly.}
These numerical results are presented in Table \ref{Table:NEN1} and Table \ref{Table:NEN2}. Both tables show that the convergence speed of DSSN of all the test examples are superior to that of PSSN. Based on the results presented in Table \ref{Table:NEN2}, two instances {\it Leukaemia} and $\it bodyfat7$ are chosen to illustrate that the DSSN  has the advantage over PSSN and the illustration is presented by Figure \ref{Fig:NELasso}. We can observe that the performance of DSSN for solving the nonnegative elastic net Lasso is similar to that of DSSN for solving the elastic net Lasso.
{\small\begin{table}[H]
\centering
\caption{The performances of DSSN(D) and PSSN(P) on selected instances from UCI data repository. $\lambda=1:-0.01:0.1$
}\label{Table:NEN1}\centering
\vskip 1.5mm
{\small \begin{tabular}{ccccc}
\hline
problem name     & $\mu$ &  nnz         &      time     & \multicolumn{1}{c}{error}\\
$(m,n) $;        &   ~   & ({\rm max},{\rm min})             &   $P$ $|$ $D$     & $P$ $|$ $D$\\
\hline
\multirow{3}{*}{\parbox[c]{2.3cm}{Leukaemia\\(38,7129)}} & 1e-02  &  (94, 44) &  1.03 $|$ 0.15   &  2.69e-14 $|$ 1.39e-13 \\ & 1e-03  &  (369, 97) &  1.47 $|$ 0.33   &  1.72e-08 $|$ 1.24e-13 \\ & 1e-04  &  (1280, 370) &  1.58 $|$ 0.64   &  3.76e-08 $|$ 2.60e-13 \\\hline \multirow{3}{*}{\parbox[c]{2.3cm}{duke\\(44,7129)}} & 1e-02  &  (82, 42) &  1.07 $|$ 0.17   &  2.53e-14 $|$ 1.50e-13 \\ & 1e-03  &  (337, 84) &  1.55 $|$ 0.32   &  9.66e-14 $|$ 4.75e-13 \\ & 1e-04  &  (1196, 339) &  1.85 $|$ 0.73   &  3.48e-08 $|$ 2.93e-12 \\\hline \multirow{3}{*}{\parbox[c]{2.3cm}{colon-cancer\\(62,2000)}} & 1e-02  &  (92, 59) &  0.29 $|$ 0.14   &  3.13e-14 $|$ 9.78e-13 \\ & 1e-03  &  (285, 100) &  0.47 $|$ 0.31   &  1.27e-13 $|$ 6.40e-12 \\ & 1e-04  &  (716, 288) &  0.77 $|$ 0.63   &  1.09e-08 $|$ 2.64e-11 \\\hline \multirow{3}{*}{\parbox[c]{2.3cm}{mpg7\\(392,3432)}} & 1e-02  &  (26, 19) &  1.61 $|$ 0.24   &  1.36e-12 $|$ 3.15e-10 \\ & 1e-03  &  (69, 51) &  1.64 $|$ 0.29   &  2.13e-12 $|$ 9.44e-10 \\ & 1e-04  &  (384, 192) &  2.91 $|$ 0.96   &  4.88e-12 $|$ 1.16e-10 \\\hline \multirow{3}{*}{\parbox[c]{2.3cm}{pyrim5\\(74,201376)}} & 1e-03  &  (314, 113) &  41.74 $|$ 5.57   &  9.68e-09 $|$ 1.25e-12 \\ & 1e-04  &  (2161, 433) &  42.30 $|$ 7.95   &  5.80e-08 $|$ 4.95e-12 \\ & 1e-05  &  (12191, 2304) &  47.28 $|$ 19.12   &  4.90e-08 $|$ 2.25e-08 \\\hline \multirow{3}{*}{\parbox[c]{2.3cm}{bodyfat7\\(252,116280)}} & 1e-03  &  (11, 4) &  33.68 $|$ 5.06   &  4.17e-14 $|$ 3.00e-12 \\ & 1e-04  &  (199, 14) &  54.26 $|$ 7.61   &  5.60e-09 $|$ 1.63e-12 \\ & 1e-05  &  (6901, 647) &  60.62 $|$ 15.01   &  6.78e-08 $|$ 7.48e-08 \\\hline \multirow{3}{*}{\parbox[c]{2.3cm}{triazines4\\(186,635376)}} & 1e-01  &  (269, 269) &  745.92 $|$ 106.51   &  6.43e-12 $|$ 1.28e-09 \\ & 1e-02  &  (439, 409) &  532.64 $|$ 94.63   &  6.60e-09 $|$ 5.66e-10 \\ & 1e-03  &  (1038, 699) &  487.09 $|$ 93.09   &  1.48e-09 $|$ 1.72e-09 \\\hline		
\end{tabular}}	
\end{table}	}

{\small\begin{table}[H]
\centering
\caption{The performances of DSSN(D) and PSSN(P) on selected  instances from UCI data repository.
}\label{Table:NEN2}\centering
\vskip 1.5mm
\begin{tabular}{ccccc}
\hline
problem name     & $\lambda$ &  nnz         &      time     & \multicolumn{1}{c}{error}\\
$(m,n) $;        &   ~   & ({\rm max},{\rm min})             &   $P$ $|$ $D$     & $P$ $|$ $D$\\
\hline
\multirow{3}{*}{\parbox[c]{2.3cm}{Leukaemia\\(38,7129)}} & 1e+02  &  (2411, 1221) &  2.79 $|$ 1.40  &  2.25e-09 $|$ 4.96e-09 \\ & 1e+01  &  (2185, 358) &  2.73 $|$ 0.87  &  1.18e-08 $|$ 2.05e-13 \\ & 1e+00  &  (1280, 94) &  2.96 $|$ 0.55  &  1.73e-08 $|$ 1.24e-13 \\ & 1e-01  &  (370, 44) &  3.53 $|$ 0.46  &  3.23e-13 $|$ 3.29e-13 \\ & 1e-02  &  (97, 39) &  6.91 $|$ 0.79  &  7.93e-09 $|$ 2.67e-12 \\\hline \multirow{3}{*}{\parbox[c]{2.3cm}{duke\\(44,7129)}} & 1e+02  &  (2151, 1131) &  3.28 $|$ 1.72  &  7.41e-09 $|$ 3.59e-13 \\ & 1e+01  &  (1972, 326) &  3.09 $|$ 1.11  &  2.56e-08 $|$ 2.30e-13 \\ & 1e+00  &  (1196, 82) &  3.61 $|$ 0.68  &  7.70e-09 $|$ 2.47e-13 \\ & 1e-01  &  (339, 42) &  3.72 $|$ 0.54  &  3.81e-09 $|$ 7.48e-13 \\ & 1e-02  &  (85, 39) &  5.85 $|$ 0.83  &  1.19e-10 $|$ 9.12e-12 \\\hline \multirow{3}{*}{\parbox[c]{2.3cm}{colon-cancer\\(62,2000)}} & 1e+02  &  (962, 640) &  1.12 $|$ 1.02  &  1.58e-08 $|$ 3.00e-13 \\ & 1e+01  &  (920, 254) &  1.00 $|$ 0.78  &  1.57e-08 $|$ 8.66e-13 \\ & 1e+00  &  (716, 92) &  0.89 $|$ 0.49  &  1.08e-13 $|$ 2.62e-12 \\ & 1e-01  &  (288, 59) &  1.02 $|$ 0.46  &  2.20e-13 $|$ 9.61e-12 \\ & 1e-02  &  (102, 55) &  1.97 $|$ 0.85  &  2.84e-13 $|$ 7.32e-11 \\\hline \multirow{3}{*}{\parbox[c]{2.3cm}{mpg7\\(392,3432)}} & 1e+02  &  (1574, 118) &  5.43 $|$ 1.95  &  4.64e-12 $|$ 5.70e-12 \\ & 1e+01  &  (914, 44) &  4.04 $|$ 0.88  &  4.03e-12 $|$ 1.58e-11 \\ & 1e+00  &  (384, 24) &  4.64 $|$ 0.83  &  3.62e-12 $|$ 3.54e-10 \\ & 1e-01  &  (192, 18) &  5.93 $|$ 1.06  &  3.34e-12 $|$ 6.39e-10 \\ & 1e-02  &  (138, 18) &  6.95 $|$ 1.23  &  3.28e-12 $|$ 9.39e-09 \\\hline \multirow{3}{*}{\parbox[c]{2.3cm}{pyrim5\\(74,201376)}} & 1e+02  &  (51298, 9319) &  78.46 $|$ 52.21  &  8.13e-08 $|$ 2.62e-10 \\ & 1e+01  &  (38448, 1613) &  69.98 $|$ 22.74  &  5.30e-08 $|$ 9.78e-09 \\ & 1e+00  &  (12191, 314) &  76.34 $|$ 15.20  &  3.26e-08 $|$ 8.37e-10 \\ & 1e-01  &  (2304, 113) &  104.40 $|$ 14.92  &  1.41e-08 $|$ 4.69e-12 \\ & 1e-02  &  (432, 71) &  328.09 $|$ 22.35  &  2.81e-08 $|$ 1.49e-11 \\\hline \multirow{3}{*}{\parbox[c]{2.3cm}{bodyfat7\\(252,116280)}} & 1e+02  &  (32516, 1663) &  79.46 $|$ 30.06  &  2.56e-07 $|$ 1.44e-08 \\ & 1e+01  &  (21871, 40) &  72.36 $|$ 16.28  &  5.42e-08 $|$ 4.87e-09 \\ & 1e+00  &  (6901, 11) &  55.50 $|$ 10.44  &  2.40e-09 $|$ 6.37e-11 \\ & 1e-01  &  (647, 4) &  57.23 $|$ 9.67  &  1.51e-09 $|$ 1.81e-11 \\ & 1e-02  &  (79, 4) &  92.08 $|$ 12.05  &  5.20e-14 $|$ 8.07e-10 \\\hline \multirow{3}{*}{\parbox[c]{2.3cm}{triazines4\\(186,635376)}} & 1e+02  &  (10693, 365) &  953.57 $|$ 134.71  &  9.17e-09 $|$ 4.84e-11 \\ & 1e+01  &  (2820, 295) &  764.55 $|$ 90.65  &  5.90e-10 $|$ 3.60e-10 \\ & 1e+00  &  (1038, 269) &  930.33 $|$ 111.89  &  5.33e-09 $|$ 2.85e-09 \\ & 1e-01  &  (699, 269) &  1399.66 $|$ 170.56  &  1.22e-07 $|$ 3.19e-08 \\ & 1e-02  &  (619, 269) &  6197.73 $|$ 442.13  &  3.46e-01 $|$ 2.58e-07 \\\hline
	
\end{tabular}	
\end{table}	}

\begin{figure}[H]
\centering
\includegraphics[scale=0.5]{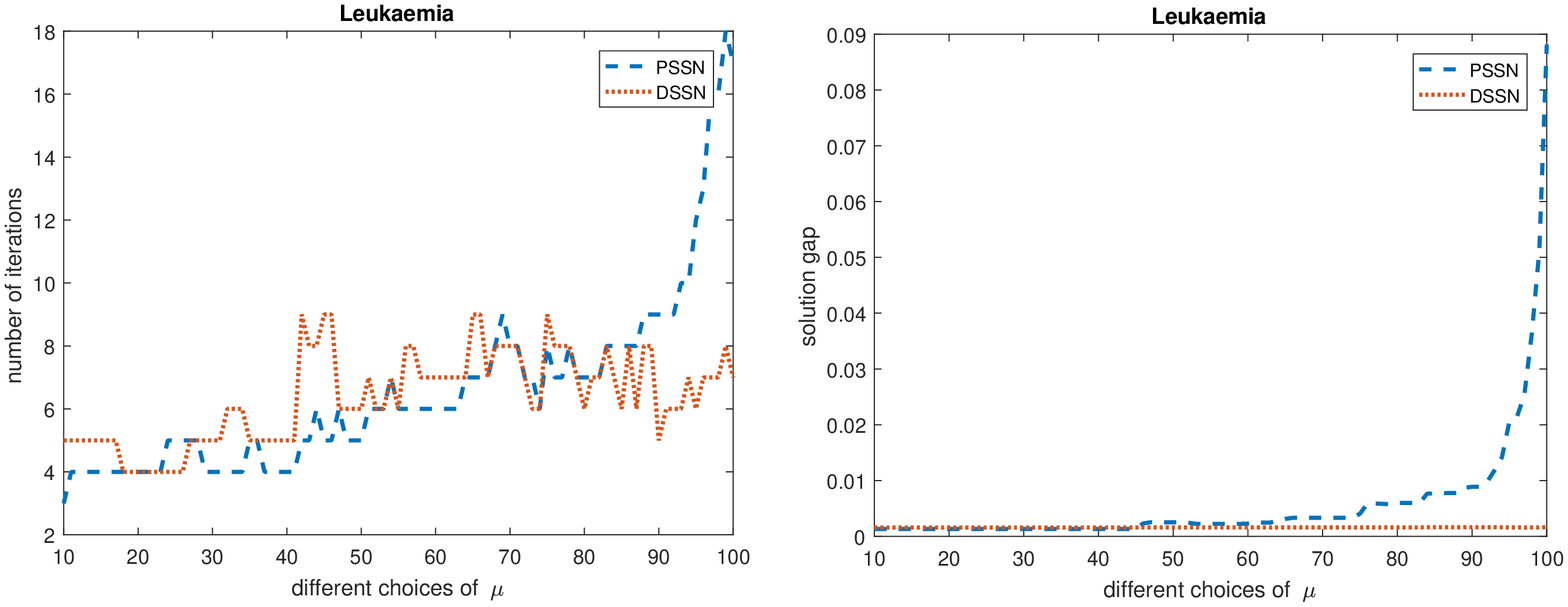}
%\caption{profile of Leukaemia, $\lambda=0.01$ .}\label{Fig:NELasso_leu}
%\end{figure}
%
%\begin{figure}[H]
%\centering
\includegraphics[scale=0.5]{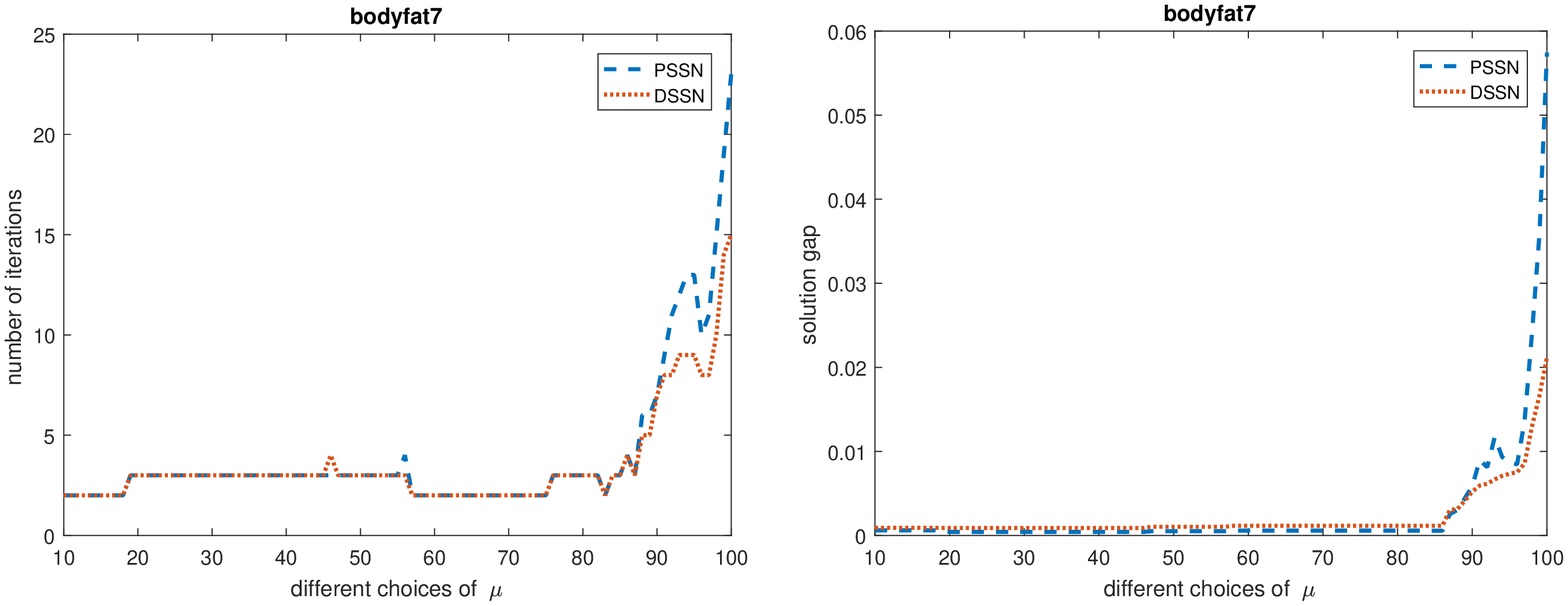}
\caption{Same as Figure \ref{Fig:ELasso}.}\label{Fig:NELasso}
\end{figure}

\section{Conclusion}\label{sec:conclusion}
In this paper, we have proposed a dual based semismooth Newton (DSSN) method for solving the sparse Tikhonov regularization. By taking advantage the Tikhonov regularization, the DSSN method can not only avoid increasing the scale of data matrix, but also can overcome the drawbacks of the primal semismooth Newton method (PSSN). In other words, the DSSN method is globally convergent and can achieve at least R-superlinearly convergence rate without reconstructing the data matrix.  The numerical efficiency and stability have also been demonstrated by comparing with the PSSN method and S{\footnotesize SNAL} on high-dimensional the UCI data sets. Finally, it is worth noting that this paper focuses on a class of simple Tikhonov regularization, i.e., the Tikhonov matrix is taken as $\alpha I ~(\alpha>0)$ . We leave the research topic that how to generalize DSSN to general Tikhonov matrix as our further work.

\end{document}